	\newtheoremstyle{slanted}
	{}
	{}
	{\slshape}
	{}
	{\bfseries}
	{.}
	{ }
	{}
	\theoremstyle{slanted}
	\newtheorem{theo}{Theorem}[section]
	\newtheorem*{claim*}{Claim}
	\newtheorem{prop}[theo]{Proposition}
	\newtheorem{lemma}[theo]{Lemma}
	\newtheorem{definition}[theo]{Definition}
	\newtheorem{corollary}[theo]{Corollary}
	\newtheorem{remark}[theo]{Remark}
	\newcommand{\egdef}{:=}
	\DeclareMathOperator{\Id}{Id}	
	\newcommand{\tend}[3][]{\xrightarrow[#2\to#3]{#1}}
	\newcommand{\ind}[1]{\mathds{1}_{#1}} 
	\newcommand{\ZZ}{\mathbb{Z}}
	\newcommand{\RR}{\mathbb{R}}
	\newcommand{\NN}{\mathbb{N}}
	\newcommand{\A}{\mathscr{A}}
	\newcommand{\B}{\mathscr{B}}
	\newcommand{\D}{\mathscr{D}}
	\newcommand{\cd}{C^{d}}
	\newcommand{\td}{T^{\times d}}
	\newcommand{\typx}{\overline{x}}
	\newcommand{\stack}[2]{\array{c}{\scriptstyle #1}\\[-1.1ex]{\scriptstyle #2}\endarray}
\title[Cartesian powers of infinite Chacon]{Invariant measures for Cartesian powers of Chacon infinite transformation}
\author{\'{E}lise Janvresse, Emmanuel Roy and Thierry de la Rue}
\address{\'Elise Janvresse: 
Laboratoire Amiénois de Mathématiques Fondamentales et Appliquées, CNRS-UMR 7352, Université de Picardie Jules Verne, 33 rue Saint Leu, F80039 Amiens cedex 1,
France.}
\email{Elise.Janvresse@u-picardie.fr}
\address{Emmanuel Roy: Laboratoire Analyse, Géométrie et Applications, Université Paris 13 Institut Galilée,
99 avenue Jean-Baptiste Clément
F93430 Villetaneuse, France.}
\email{roy@math.univ-paris13.fr}
\address{Thierry de la Rue:
Laboratoire de Mathématiques Rapha\"el Salem,
Université de Rouen, CNRS,
Avenue de l'Université,
F76801 Saint \'Etienne du Rouvray, France.}
\email{Thierry.de-la-Rue@univ-rouen.fr}
\thanks{Research partially supported by French research group GeoSto
(CNRS-GDR3477)}
\begin{document}
\bibliographystyle{amsplain}

\maketitle
\begin{abstract}  
We describe all boundedly finite measures which are invariant by Cartesian powers of an infinite measure preserving version of Chacon transformation. 
All such ergodic measures are products of so-called \emph{diagonal measures}, which are measures generalizing in some way the measures supported on a graph.
Unlike what happens in the finite-measure case, this class of diagonal measures is not reduced to measures supported on a graph arising from powers of the transformation: 
it also contains some weird invariant measures, whose marginals are singular with respect to the measure invariant by the transformation. 
We derive from these results that the infinite Chacon transformation has trivial centralizer, and has no nontrivial factor. 

At the end of the paper, we prove a result of independent interest, providing sufficient conditions for an infinite measure preserving dynamical system defined on a Cartesian product to decompose into a direct product of two dynamical systems.
\end{abstract}

{\bf Keywords: } Chacon infinite measure preserving transformation, rank-one transformation, joinings.

{\bf MSC classification: } 37A40, 37A05.

\section{Chacon infinite transformation}
\subsection{Introduction}
The classical Chacon transformation, which is a particular case of a finite measure preserving rank-one transformation, is considered as one of the jewels of ergodic theory~\cite{kt2006}. It has been formally described in~\cite{Friedman}, following ideas introduced by Chacon in 1966. Among other properties, it has been proved to have no non trivial factor, and to commute only with its powers~\cite{DJ1978}. More generally, it has minimal self-joinings~\cite{DJRS1980}. For a symbolic version of this transformation, Del~Junco and Keane~\cite{DJK1985} have also shown that if $x$ and $y$ are not on the same orbit, and at least one of them is outside a countable set of exceptional points, then $(x,y)$ is generic for the product measure. 

Adams, Friedman and Silva introduced in 1997 (\cite{AFS1997}, Section~2) an infinite measure preserving rank-one transformation which can be seen as the analog of the classical Chacon transformation in infinite measure. They proved that all its Cartesian powers are conservative and ergodic. 

This transformation, denoted by $T$ throughout the paper, is the main object of the present work. We recall its construction on $\RR_+$ by cutting and stacking in the next section. In particular, we are interested in lifting known results about self-joinings of Chacon transformation to the infinite-measure case. This leads us to study all ergodic measures on $(\RR_+)^d$ which are boundedly finite and $\td$-invariant: we prove in Theorem~\ref{thm:msj} that all such measures are products of so-called \emph{diagonal measures}, which are measures generalizing in some way the measures supported on a graph (see Definition~\ref{def:diagonal}). These diagonal measures are studied in details in Section~\ref{sec:diagonal}. Surprisingly, besides measures supported on a graph arising from powers of $T$, we prove the existence of some weird invariant measures whose marginals are singular with respect to the Lebesgue measure. (It may happen that these marginals take only the values 0 or $\infty$, which is for example the 
case for a product measure. But even in such a case, it makes sense to consider their absolute continuity.)

However, in Section~\ref{sec:joinings}, we prove in Proposition~\ref{prop:alphamu} that these weird measures cannot appear in the ergodic decomposition of selfjoinings of $T$. These selfjoinings are therefore convex combinations of graph measures arising from powers of $T$. This allows to obtain the expected consequences that the infinite Chacon transformation has trivial centralizer, and has no nontrivial $\sigma$-finite factor.

At the end of the paper, we prove in Annex~A a result used in the proof of Theorem~\ref{thm:msj} which can be of independent interest: Theorem~\ref{thm:product} provides sufficient conditions for an infinite measure preserving dynamical system defined on a Cartesian product to decompose into a direct product of two dynamical systems.

\medskip 
The authors thank Alexandre Danilenko for having pointed out a mistake in an earlier version of this paper. 

\subsection{Construction of Chacon infinite transformation}

We define the transformation on $X\egdef\RR_+$: In the first step we consider the interval $[0,1)$, which is cut into three subintervals of equal length. We take the extra interval $[1,4/3)$ and stack it above the middle piece, and 4 other extra intervals of length $1/3$ which we stack above the rightmost piece. Then we stack all intervals left under right, getting a tower of height $h_1=8$. The transformation $T$ maps each point to the point exactly above it in the tower. At this step $T$ is yet undefined on the top level of the tower.

After step $n$ we have a tower of height $h_n$, called tower~$n$, made of intervals of length $1/3^n$ which are closed to the left and open to the right. At step~$(n+1)$, tower~$n$ is cut into three subcolumns of equal width. We add an extra interval of length $1/3^{n+1}$ above the middle subcolumn and $3h_n+1$ other extra intervals above the third one. We pick the extra intervals successively by taking the leftmost interval of desired length in the unused part of $\RR_+$.  Then we stack the three subcolumns left under right and get tower~$n+1$ of height $h_{n+1}=2(3h_n+1)$.

Extra intervals used at step $n+1$ are called \emph{$(n+1)$-spacers}, so that tower~$(n+1)$ is the union of tower~$n$ with $3h_{n}+2$ such $(n+1)$-spacers. The total measure of the added spacers being infinite, we get at the end a transformation $T$ defined on $\RR_+$, which preserves the Lebesgue measure $\mu$.

For each $n\ge1$, we define $C_n$ as the bottom half of tower~$n$: $C_n$ is the union of $h_n/2$ intervals of width $1/3^n$, which contains the whole tower~$(n-1)$. Notice that $C_n\subset C_{n+1}$, and that $X=\bigcup_n C_n$.
We also define a function $t_n$ on tower~$n$, taking values in $\{1,2,3\}$, which indicates for each point whether it belongs to the first, the second, or the third subcolumn of tower~$n$.

\begin{figure}[htp]
  \centering
  \includegraphics{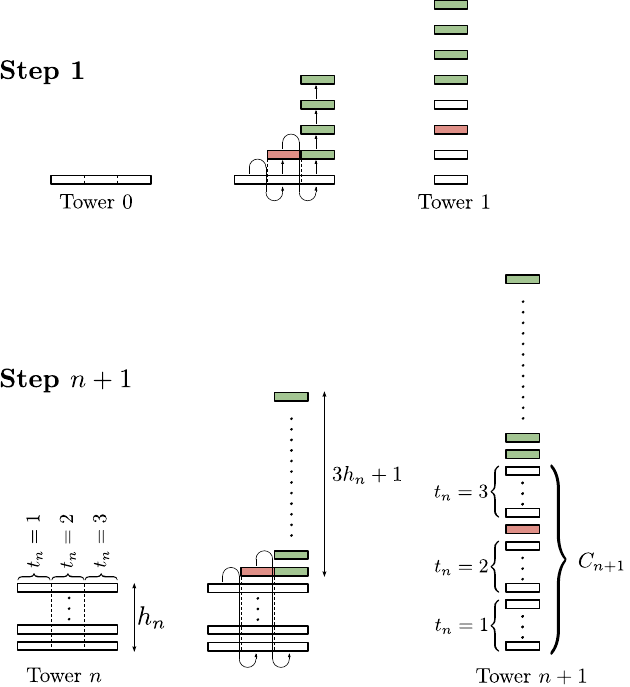}
  \caption{Construction of Chacon infinite measure preserving transformation by cutting and stacking}
  \label{fig:construction}
\end{figure}

\section{Ergodic invariant measures for Cartesian powers of the infinite Chacon transformation}
Let $d\ge1$ be an integer. We consider the $d$-th Cartesian power of the transformation $T$:
 \[\td:X^d\ni(x_1,\ldots,x_d)\mapsto(Tx_1,\ldots,Tx_d).\]
\begin{definition}
\label{def:locally-finite}
A measure $\sigma$ on $X^d$ is said to be \emph{boundedly finite} if $\sigma(A)<\infty$ for all bounded measurable subset $A\subset X^d$.
\end{definition}
Equivalently, $\sigma$ is boundedly finite if $\sigma(\cd_n)<\infty$ for each $n$. Obviously, boundedly finite implies $\sigma$-finite.

\subsection{Products of diagonal measures}

Our purpose in this section is to describe, for each $d\ge1$, all boundedly finite measures on $X^d$ which are ergodic for the action of $\td$. 
Examples of such measures are given by so-called \emph{graph joinings}:
A measure $\sigma$ on $X^d$ is called a graph joining if there exist some real $\alpha>0$ and $(d-1)$ $\mu$-preserving transformations $S_2,\ldots,S_d$, commuting with $T$, and such that 
\[
  \sigma(A_1\times\cdots\times A_d)  = \alpha \mu (A_1\cap S_2^{-1}(A_2)\cap\cdots\cap S_d^{-1}(A_d)).
\]
In other words, $\sigma$ is the pushforward measure of $\mu$ by the map $x\mapsto(x,S_2x,\ldots,S_dx)$. In the case where the transformations $S_j$ are powers of $T$, such a graph joining is a particular case of what we call a \emph{diagonal measure}, which we define now.

From the properties of the sets $C_n$, it follows that $\cd_n\subset\cd_{n+1}$, and that $X^d=\bigcup_n \cd_n$. We call \emph{$n$-box} a subset of $X^d$ which is a Cartesian product $I_1\times \cdots\times I_d$, where each $I_j$ is a level of $C_n$. We call \emph{$n$-diagonal} a finite family of $n$-boxes of the form 
\[
  B, \td B,\ldots, (\td)^{\ell}B,
\]
which is maximal in the following sense: $ (\td)^{-1}B\not\subset \cd_n$ and $ (\td)^{\ell+1}B\not\subset \cd_n$.

\begin{definition}
\label{def:diagonal}
A boundedly finite, $\td$-invariant measure $\sigma$ on $X^d$ is said to be a \emph{diagonal measure} if there exists an integer $n_0$ such that, for all $n\ge n_0$, $\sigma|_{\cd_n}$ is concentrated on a single $n$-diagonal.
\end{definition}

Note that, for $d=1$, there is only one $n$-diagonal for any $n$, therefore $\mu$ is itself a 1-dimensional diagonal measure. A detailed study of diagonal measures will be presented in Section~\ref{sec:diagonal}.

\begin{theo}
  \label{thm:msj}
  Let $d\ge1$, and let $\sigma$ be a nonzero, $\td$-invariant, boundedly finite measure on $X^d$, such that the system $(X^d,\sigma,\td)$ is ergodic.  Then there exists a partition of $\{1,\ldots,d\}$ into $r$ subsets $I_1,\ldots,I_r$, such that $\sigma=\sigma^{I_1}\otimes \cdots\otimes \sigma^{I_r}$, where $\sigma^{I_j}$ is a diagonal measure on $X^{I_j}$. 
  
  If the system $(X^d,\sigma,\td)$ is totally dissipative, $\sigma$ is a diagonal measure supported on a single orbit.
\end{theo}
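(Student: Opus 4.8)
The plan is to establish the dichotomy that an ergodic, boundedly finite, $\td$-invariant measure $\sigma$ is either a diagonal measure in the sense of Definition~\ref{def:diagonal}, or a nontrivial product $\sigma^J\otimes\sigma^{J^c}$ over a partition of $\{1,\dots,d\}$ into two nonempty blocks $J$ and $J^c$. Since each factor of an ergodic product is again ergodic, the full statement then follows by induction on $d$, the diagonal case serving as the base of the recursion. To set this up I would first encode $\sigma|_{\cd_n}$ combinatorially: $\cd_n$ is partitioned into finitely many $n$-boxes, and a $d$-tuple lying in one of them is described by the vector of heights of its coordinates inside tower~$n$. Under $\td$ this vector translates diagonally by $(1,\dots,1)$ until some coordinate leaves $\cd_n$, so an $n$-diagonal is precisely the datum of the pairwise height differences, and $\sigma$ is diagonal exactly when, for all large $n$, these differences are $\sigma$-a.e.\ constant.

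Next I would study how the relative-position data is transported when tower~$n$ is refined into tower~$(n+1)$. Each level of $C_n$ splits into three subcolumns recorded by $t_n\in\{1,2,3\}$, and spacers are inserted ($3h_n+1$ above the third subcolumn, one above the middle), so that the $(n+1)$-position of a tuple is determined by its $n$-position together with the subcolumn vector $(t_n(x_1),\dots,t_n(x_d))\in\{1,2,3\}^d$. From the explicit spacer counts I would compute the transition describing how mass carried by a given difference pattern at level~$n$ is redistributed among difference patterns at level~$(n+1)$. Invariance and bounded finiteness of $\sigma$ constrain these transitions, and a martingale/ergodic-averaging argument should force the empirical distribution of the subcolumn vectors along $\sigma$-orbits to converge.

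The core of the argument, and the step I expect to be the main obstacle, is a combinatorial rigidity lemma exploiting the asymmetry of the Chacon spacer pattern, in the spirit of the genericity result of Del~Junco and Keane. I would show that no set of orbits of positive $\sigma$-measure can sustain a pair of coordinates whose height difference changes infinitely often: the parity and size of the spacer block $3h_n+1$ obstruct the re-synchronization that genuinely independent behavior would demand, so for each pair of coordinates the difference is either eventually frozen---forcing synchronization---or the two coordinates become asymptotically independent. Making this precise in infinite measure, where one cannot normalize to a probability and must instead control the conditional masses on the $\cd_n$ uniformly in $n$, is the delicate point.

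With the rigidity lemma in hand I would define $i\sim j$ when coordinates $i$ and $j$ synchronize, verify that $\sim$ is an equivalence relation, and take its classes as the blocks $I_1,\dots,I_r$. If there is a single class, the frozen differences show that $\sigma|_{\cd_n}$ is concentrated on one $n$-diagonal for all large $n$, so $\sigma$ is diagonal. Otherwise I would split $\{1,\dots,d\}$ into one class $J$ and its complement $J^c$, and check that the asymptotic-independence established above provides exactly the hypotheses of Theorem~\ref{thm:product}, yielding $\sigma=\sigma^J\otimes\sigma^{J^c}$ and closing the induction. Finally, in the totally dissipative case, an ergodic dissipative system is carried by a single $\td$-orbit; intersecting that orbit with each $\cd_n$ produces a maximal orbit segment of $n$-boxes, that is, a single $n$-diagonal, so $\sigma$ is a diagonal measure supported on one orbit, as claimed.
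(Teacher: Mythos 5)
Your overall skeleton (induction on $d$, a diagonal-versus-product dichotomy, Theorem~\ref{thm:product} to split off a factor, a single-orbit argument in the dissipative case) matches the paper's architecture, but the step you yourself flag as the main obstacle is not merely delicate: as you state it, the rigidity lemma is false. You claim that for each pair of coordinates the height difference is either eventually frozen or the two coordinates become asymptotically independent. The paper's \emph{weird} diagonal measures (Section~\ref{sec:diagonal}, built from parameter sequences in which the corner case occurs infinitely often) are ergodic, conservative, boundedly finite, concentrated on a single $n$-diagonal for every large $n$, and yet the height offset between coordinates changes at every corner transition from tower~$n$ to tower~$n+1$; the coordinates are neither synchronized in your frozen sense nor independent. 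So a synchronization equivalence relation based on frozen differences cannot be the right invariant, and ``asymptotic independence'' is in any case not a hypothesis of Theorem~\ref{thm:product}: what that theorem needs is non-singularity (quasi-invariance) of $\sigma$ under the partial shift acting as $T$ on one block and as the identity on the other. The paper's actual mechanism is quite different: Lemma~\ref{lemma:special_n} produces infinitely many $n$ at which the first $(n+1)$-crossing contains the second $n$-crossing and forces $t_n(\typx_i)\in\{1,2\}$ for every coordinate of a typical point; the dichotomy is then whether the partition $\{t_n=1\}\sqcup\{t_n=2\}$ is trivial for all such large $n$ (diagonal case) or nontrivial infinitely often, in which case Lemma~\ref{lemma:BtoSB} gives a bijection between visits to $B$ along the first $n$-crossing and visits to $SB$ along the second, and Hopf's ratio theorem converts these counts into $\sigma(SB)=c\,\sigma(B)$ for all Borel $B$ --- exactly the non-singularity input for Theorem~\ref{thm:product}. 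Your proposal contains no substitute for this quantitative step, and the ``parity of $3h_n+1$'' heuristic does not supply one; the relevant asymmetry is the single spacer over the middle column, exploited through the $h_n+1$ shift of Lemma~\ref{lemma:BtoSB}.

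A smaller but real gap is the dissipative case: you assert that intersecting the supporting orbit with $\cd_n$ automatically yields a single $n$-diagonal, but a single orbit can a priori meet $\cd_n$ in several $n$-crossings lying on different $n$-diagonals. Bounded finiteness only gives finitely many $n$-crossings; one still has to rule out a second one, which the paper does by taking the smallest $\ell$ such that two $m$-crossings share an $(\ell+1)$-crossing, deducing $t_\ell(x_i)\in\{1,2\}$ as in Lemma~\ref{lemma:special_n}, and applying Lemma~\ref{lemma:tn} to reach a contradiction. That argument, or an equivalent one, needs to be supplied.
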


We will prove the theorem by induction on $d$. The following proposition deals with the case $d=1$.

\begin{prop}
  \label{prop:d=1}
  The Lebesgue measure $\mu$ is, up to a multiplicative constant, the only $T$-invariant, boundedly finite measure on $X$.
\end{prop}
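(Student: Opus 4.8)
The plan is to exploit the rank-one cutting-and-stacking structure, which forces any $T$-invariant, boundedly finite measure $\nu$ to coincide, up to a scalar, with $\mu$ on every level of every tower, and then to upgrade this agreement to a global identity using the fact that the levels generate the Borel $\sigma$-algebra. Throughout, by a \emph{level} I mean one of the intervals of length $1/3^n$ composing tower~$n$.

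First I would show that all levels of a given tower have the same $\nu$-measure. By construction $T$ maps each level of tower~$n$ bijectively onto the level directly above it, so $T^{-1}$ sends any non-bottom level onto the level directly below. Since $\nu$ is $T$-invariant, this yields $\nu(L) = \nu(L')$ whenever $L, L'$ are consecutive levels of tower~$n$; hence all $h_n$ levels share a common value $a_n$, which is finite because each level is a bounded interval and $\nu$ is boundedly finite. Next I would relate $a_n$ to $a_{n+1}$ through the refinement at step~$(n+1)$: cutting tower~$n$ into three subcolumns of equal width splits each level $L$ (an interval of length $1/3^n$) into its three thirds, each an interval of length $1/3^{n+1}$ that becomes a distinct level of tower~$(n+1)$. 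Thus $L$ is the disjoint union of three levels of tower~$(n+1)$, giving $a_n = 3a_{n+1}$, and therefore $a_n = c\,3^{-n}$ for some constant $c\ge 0$. Since $\mu(L)=1/3^n$ for every level $L$ of tower~$n$, we obtain $\nu(L)=c\,\mu(L)$ on all levels of all towers.

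Finally I would pass from levels to the full measure. The collection of all level-intervals, over all $n$, together with the empty set, is a $\pi$-system: the intersection of two levels is either empty or again a level of the finer tower. This $\pi$-system generates the Borel $\sigma$-algebra, the levels forming refining partitions by intervals of length $1/3^n\to 0$. Working inside each $C_n$, which is a finite union of levels and hence has finite measure for both $\nu$ and $c\mu$, Dynkin's $\pi$--$\lambda$ theorem gives $\nu=c\mu$ on $C_n$; letting $C_n\uparrow X$ then yields $\nu=c\mu$ on all of $X$.

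The one point requiring genuine care — the main obstacle — is the bookkeeping in the second step: I must confirm directly from the construction that the three thirds of a level of tower~$n$ really are three \emph{distinct} levels of tower~$(n+1)$, so that no $\nu$-mass leaks into or out of the newly added spacers, and that the levels contained in $C_n$ do refine to generate its Borel structure even though $C_n$ is a scattered union of intervals inside $\RR_+$. Everything else is standard $\sigma$-finite measure theory.
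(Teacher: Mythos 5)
Your proposal is correct and follows essentially the same route as the paper: $T$-invariance forces all levels of tower~$n$ to carry the same mass $a_n$, the three-fold refinement gives $a_n=3a_{n+1}$, bounded finiteness gives $a_n<\infty$, and the triadic levels determine the measure. The only difference is cosmetic — the paper phrases the conclusion via the intervals $[j/3^n,(j+1)/3^n)$ and leaves the generation of the Borel $\sigma$-algebra implicit, whereas you spell it out with a $\pi$--$\lambda$ argument.
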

\begin{proof}
  Let $\sigma$ be a $T$-invariant $\sigma$-finite measure. Then for each $n$, the intervals which are levels of tower~$n$ have the same measure. Since the successive towers exhaust $\RR_+$, we get that for each $n$,  all intervals of the form $[j/3^n,(j+1)/3^n)$ for integers $j\ge 0$ have the same measure $\sigma_n$. Obviously $\sigma_{n+1}=\sigma_n/3$. Since $\sigma$ is boundedly finite, $\sigma_0<\infty$. Hence $\sigma_n<\infty$ and $\sigma$ is, up to the multiplicative constant $\sigma_0$, equal to the Lebesgue measure.
\end{proof}

Observe that assuming only $\sigma$-finiteness for the measure $\sigma$ is not enough: The counting measure on rational points is $\sigma$-finite, $T$-invariant, but singular with respect to Lebesgue measure. Can we have a counterexample where $\sigma$ is conservative?

\subsection{Technical lemmas}
 
 In the following, $d$ is an integer, $d\ge2$.

\begin{lemma}
  \label{lemma:BtoSB}
  Let $G_1\sqcup G_2=\{1,\ldots,d\}$ be a partition of $\{1,\ldots,d\}$ into two disjoint sets, one of which is possibly empty. Let us define a transformation $S:X^d\to X^d$  by 
  \[
      S(y_1,\ldots, y_d) \egdef (z_1,\ldots,z_d),\text{ where } z_i\egdef\begin{cases}
                                                                 T y_i &\text{ if }i\in G_1, \\
                                                                 y_i &\text{ if }i\in G_2.
                                                               \end{cases}
   \]
   Let $n\ge1$, let $B$ be an $n$-box, 
   and let $x=(x_1,\ldots,x_d)\in \cd_n$. If $t_n(x_i)=1$ for $i\in G_1$ and $t_n(x_i)=2$ for $i\in G_2$, then 
   \[
     x\in B\Longleftrightarrow (\td)^{h_n+1}x\in SB.
   \]
Similarly, if $t_n(x_i)=2$ for $i\in G_1$ and $t_n(x_i)=3$ for $i\in G_2$, then 
   \[
     x\in SB\Longleftrightarrow (\td)^{-h_n-1}x\in B.
   \]
\end{lemma}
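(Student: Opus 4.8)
The plan is to exploit the product structure throughout and reduce both equivalences to one-dimensional statements, one coordinate at a time. Since $\td$ acts coordinatewise, since the $n$-box $B$ is a product $I_1\times\cdots\times I_d$ of levels of $C_n$, and since $S$ applies $T$ to the $i$-th coordinate exactly when $i\in G_1$, the image $SB$ is again a box whose $i$-th factor is $TI_i$ for $i\in G_1$ and $I_i$ for $i\in G_2$. Hence $x\in B$ iff $x_i\in I_i$ for every $i$, and $(\td)^{h_n+1}x\in SB$ iff $T^{h_n+1}x_i\in(SB)_i$ for every $i$, so it suffices to establish, coordinate by coordinate, that $x_i\in I_i\iff T^{h_n+1}x_i\in(SB)_i$.

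First I would record the geometry of tower~$(n+1)$: from bottom to top it is the first subcolumn (levels $0,\dots,h_n-1$), the middle subcolumn (levels $h_n,\dots,2h_n-1$), a single spacer (level $2h_n$), the third subcolumn (levels $2h_n+1,\dots,3h_n$), and finally $3h_n+1$ spacers. From this I read off the two facts that drive everything: $T^{h_n}$ carries a point of the first subcolumn at tower-$n$ level $j$ to the point of the middle subcolumn at the same tower-$n$ level $j$ (no spacer lies between them), while $T^{h_n+1}$ carries a point of the middle subcolumn at tower-$n$ level $j$ to the point of the third subcolumn at the same tower-$n$ level $j$ (the extra $+1$ absorbs the lone spacer at level $2h_n$). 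Because each $I_i$ is a full-width level of $C_n$, the membership $x_i\in I_i$ depends only on the tower-$n$ level of $x_i$; and since $x\in\cd_n$ forces every $x_i$ into the bottom half of tower~$n$, all the iterates above stay well inside tower~$(n+1)$ and never reach its top, so $T^{h_n+1}x_i$ is unproblematically defined.

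The first equivalence now falls out coordinatewise. For $i\in G_2$ we have $t_n(x_i)=2$ and $(SB)_i=I_i$; the middle$\to$third fact gives that $T^{h_n+1}x_i$ has the same tower-$n$ level as $x_i$, whence $T^{h_n+1}x_i\in I_i\iff x_i\in I_i$. For $i\in G_1$ we have $t_n(x_i)=1$ and $(SB)_i=TI_i$, so $T^{h_n+1}x_i\in TI_i\iff T^{h_n}x_i\in I_i$, and the first$\to$middle fact gives that $T^{h_n}x_i$ has the same tower-$n$ level as $x_i$, yielding the equivalence once more. Reassembling the coordinates proves $x\in B\iff(\td)^{h_n+1}x\in SB$.

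For the second equivalence I would run the same picture in reverse. Writing $y\egdef(\td)^{-h_n-1}x$, the assertion to prove is precisely $(\td)^{h_n+1}y\in SB\iff y\in B$, that is, the first equivalence applied to $y$; so it is enough to see that the hypotheses $t_n(x_i)=2$ for $i\in G_1$ and $t_n(x_i)=3$ for $i\in G_2$ translate into $y\in\cd_n$ with $t_n(y_i)=1$ for $i\in G_1$ and $t_n(y_i)=2$ for $i\in G_2$. The same geometry gives this: $T^{-h_n-1}$ sends the third subcolumn at level $j$ back to the middle subcolumn at level $j$, and the middle subcolumn at level $j$ back to the first subcolumn at level $j-1$, both of which remain in the bottom half $C_n$. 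The hard part, and the only genuine obstacle, is the boundary coordinate $i\in G_1$ sitting at tower-$n$ level $0$: there $y_i=T^{-h_n-1}x_i$ is the $T$-preimage of the base of tower~$(n+1)$ and escapes the clean subcolumn bookkeeping. I would settle this case directly by showing both sides of the equivalence are false. On one hand $x_i$ lies at level $0$ while $(SB)_i=TI_i$ sits at tower-$n$ level $\ge1$, so $x\notin SB$. On the other hand, for any $z\in C_n$ the point $Tz$ lies at tower-$n$ level $\ge1$, whereas the base of tower~$(n+1)$ is contained in level $0$ of tower~$n$; hence $T^{-1}$ of that base meets $C_n$ nowhere, so $y_i\notin C_n$, and since $I_i\subset C_n$ also $y_i\notin I_i$, whence $y\notin B$ as well. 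This completes the plan.
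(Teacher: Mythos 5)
Your proof is correct and follows essentially the same route as the paper's: both rest on tracking how $(\td)^{h_n+1}$ moves a point of $\cd_n$ between the subcolumns of tower~$(n+1)$, counting the spacers each coordinate traverses. You are in fact more careful than the paper on the second equivalence (which the paper dismisses as ``handled in the same way''): your explicit treatment of a coordinate $i\in G_1$ sitting at tower-$n$ level $0$, where $(\td)^{-h_n-1}x$ exits $\cd_n$ and both sides of the equivalence must be checked to be false, settles a boundary case the paper leaves implicit.
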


\begin{proof}
 Let $x=(x_1,\ldots,x_d)\in \cd_n$ such that $t_n(x_i)=1$ for $i\in G_1$ and $t_n(x_i)=2$ for $i\in G_2$. 
For each $1\le i\le d$, let $L_i$ be the level of $C_n$ containing $x_i$.
 If $i\in G_1$, $T^j x_i$, $j$ ranging from $1$ to $h_n+1$, never goes through an $(n+1)$-spacer, hence $T^{h_n+1}x_i\in T L_i$ (see Figure~\ref{fig:construction}).  If $i\in G_2$, $T^j x_i$, $j$ ranging from $1$ to $h_n+1$, goes through exactly one $(n+1)$-spacer, hence $T^{h_n+1}x_i\in L_i$. Hence, $(\td)^{h_n+1}x\in S(L_1\times\cdots\times L_d)$. Observe that, since $B$ is an $n$-box, $B\subset\cd_n$, thus both $B$ and $SB$ are Cartesian products of levels of tower~$n$. 
 We then get
 \begin{align*}
   x\in B & \Longleftrightarrow B=L_1\times\cdots\times L_d\\
   & \Longleftrightarrow SB= S(L_1\times\cdots\times L_d)\\
   & \Longleftrightarrow  (\td)^{h_n+1}x\in SB.
 \end{align*}

 The case $t_n(x_i)=2$ for $i\in G_1$ and $t_n(x_i)=3$ for $i\in G_2$ is handled in the same way. 
\end{proof}

\begin{lemma}
  \label{lemma:tn}
  Let $n\ge2$, $x=(x_1,\ldots,x_d)\in \cd_{n-1}$ and $\ell\ge n$. If $t_\ell(x_i)\in\{1,2\}$ for each $1\le i\le d$, then $(\td)^{h_\ell+1}x\in \cd_n$. 
\end{lemma}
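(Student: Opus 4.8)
The plan is to reduce the statement to the single-coordinate analysis already carried out in the proof of Lemma~\ref{lemma:BtoSB}, but applied at level $\ell$ instead of $n$. First I would split the indices according to the value of $t_\ell$: set $G_1\egdef\{i:t_\ell(x_i)=1\}$ and $G_2\egdef\{i:t_\ell(x_i)=2\}$, which by hypothesis form a partition of $\{1,\ldots,d\}$. Since the sets $C_m$ are increasing we have $x\in\cd_{n-1}\subset\cd_\ell$, so each $x_i$ sits in a well-defined level $L_i$ of $C_\ell$, and $B\egdef L_1\times\cdots\times L_d$ is an $\ell$-box containing $x$. Applying the first part of Lemma~\ref{lemma:BtoSB}, with $n$ replaced by $\ell$ and with this choice of $G_1,G_2$ and the associated transformation $S$, yields $(\td)^{h_\ell+1}x\in SB$; concretely, the $i$-th coordinate of $(\td)^{h_\ell+1}x$ lies in $TL_i$ when $i\in G_1$ and in $L_i$ when $i\in G_2$.

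It then remains to check that each of these target levels is contained in $C_n$, and this is where the hypothesis $x\in\cd_{n-1}$ enters. The levels of $C_\ell$ (triadic intervals of width $1/3^\ell$) refine the levels of $C_{n-1}$ (intervals of width $1/3^{n-1}$) on the same grid, so $L_i$ is contained in the level of $C_{n-1}$ containing $x_i$; in particular $L_i\subset C_{n-1}\subset C_n$. This immediately settles the coordinates $i\in G_2$, for which the target is $L_i$ itself.

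For the coordinates $i\in G_1$, whose target is $TL_i$, I would use that $T$ maps any non-top level of a tower to the level directly above it, consistently across all towers; this is a standard feature of the cutting-and-stacking construction which one verifies by induction on the tower index (cutting a level vertically and restacking preserves the ``above'' relation within each subcolumn). Writing $p_i$ for the level of tower~$(n-1)$ in which $x_i$, hence $L_i$, sits, the fact that $x_i\in C_{n-1}$ forces $p_i\le h_{n-1}/2-1<h_{n-1}-1$, so $TL_i$ is contained in level $p_i+1$ of tower~$(n-1)$. Since tower~$(n-1)$ is entirely contained in $C_n$, we get $TL_i\subset C_n$ as well. Combining the two cases gives $(\td)^{h_\ell+1}x\in C_n\times\cdots\times C_n=\cd_n$.

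The only delicate point is this last step: one must be certain that the single upward shift imposed on the $G_1$-coordinates does not push any of them out of $C_n$. This is precisely why the hypothesis is $x\in\cd_{n-1}$ rather than merely $x\in\cd_n$: being in the bottom half of tower~$(n-1)$ provides the one-level buffer between $C_{n-1}$ and the top of tower~$(n-1)$ that absorbs the shift. The remaining ingredients (the grid-nesting inclusion $L_i\subset C_{n-1}$, the inclusions $C_{n-1}\subset C_n$ and tower~$(n-1)\subset C_n$, and the tower-independent description of $T$ as ``move up one level'') are routine but should be stated carefully.
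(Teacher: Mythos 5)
Your proof is correct and follows essentially the same route as the paper's: apply Lemma~\ref{lemma:BtoSB} at level $\ell$ with $G_1,G_2$ determined by $t_\ell$, then use $x\in\cd_{n-1}$ to see that the image box $SB$ stays inside $\cd_n$. The paper compresses your coordinate-by-coordinate verification of the last step into the single observation that the $n$-box containing $x$ lies in $\cd_{n-1}$, so its image under $S$ lies in $\cd_n$; your more explicit justification of why the one-level upward shift on the $G_1$-coordinates is absorbed is exactly the content of that observation.
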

\begin{proof}
Let $B_\ell$ (respectively $B_n$) be the $\ell$-box (respectively the $n$-box) containing $x$. Observe that $B_\ell\subset B_n\subset \cd_{n-1}$ because $x\in\cd_{n-1}$. Applying Lemma~\ref{lemma:BtoSB}, we get $(\td)^{h_\ell+1}x\in SB_\ell\subset SB_n$, where $S$ is the transformation of $X^d$ acting as $T$ on coordinates $i$ such that $t_\ell(x_i)=1$ and acting as $\Id$ on other coordinates.  Since $B_n\subset \cd_{n-1}$, $SB_n\subset\cd_n$, which ends the proof.
\end{proof}

\begin{definition}
Let $x=(x_1,\ldots,x_d)\in X^d$. For each integer $n\ge1$, we call \emph{$n$-crossing for $x$} a maximal finite set of consecutive integers $j\in\ZZ$ such that $(\td)^j x\in \cd_n$. 
\end{definition}

Note that, when $j$ ranges over an $n$-crossing for $x$, $(\td)^j\ x$ successively belongs to the $n$-boxes constituting an $n$-diagonal, and that for each $1\le i\le d$, $t_n(T^jx_i)$ remains constant.

\begin{lemma}
  \label{lemma:separated}
  An $n$-crossing contains at most $h_n/2$ elements.
  Two distinct $n$-crossings for the same $x$ are separated by at least $h_n/2$ integers.
\end{lemma}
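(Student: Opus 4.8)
The plan is to reduce both statements to the geometry of a single coordinate inside tower~$n$, using only the fact that within tower~$n$ the map $T$ sends each point to the point one level above it, together with the fact that $C_n$ is exactly the bottom $h_n/2$ levels of tower~$n$. In particular, whenever $y$ and $Ty$ both lie in $C_n$, the point $Ty$ sits one level above $y$; and a point can leave $C_n$ going forward only from its top level $h_n/2-1$.

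For the bound on the size of an $n$-crossing, I would fix such a crossing $\{a, a+1, \ldots, b\}$ for $x$ and examine the first coordinate. Since $T^j x_1 \in C_n$ for all $a \le j \le b$, the level of $T^j x_1$ in tower~$n$ increases by exactly one at each step, so these $b-a+1$ levels are distinct and all belong to $\{0,\ldots,h_n/2-1\}$. Hence $b-a+1 \le h_n/2$, which is the first claim (any coordinate serves equally well).

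For the separation, I would take two distinct $n$-crossings and order them as $\{a,\ldots,b\}$ and $\{a',\ldots,b'\}$ with $b<a'$; as maximal runs of consecutive integers they are disjoint, so such an ordering exists. Maximality of the first crossing gives $(\td)^{b+1}x \notin \cd_n$, so some coordinate $i$ has $T^b x_i \in C_n$ while $T^{b+1} x_i \notin C_n$; by the remark above, $T^b x_i$ must occupy the top level $h_n/2-1$ of $C_n$. The following $h_n/2$ iterates of this coordinate then climb through the top half of tower~$n$: $T^{b+m} x_i$ sits at level $h_n/2-1+m$ for $1 \le m \le h_n/2$, i.e.\ at levels $h_n/2, \ldots, h_n-1$, all outside $C_n$. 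Thus $(\td)^{b+m}x \notin \cd_n$ for every $m \in \{1,\ldots,h_n/2\}$, so none of the integers $b+1,\ldots,b+h_n/2$ lies in any crossing, and therefore $a' \ge b + h_n/2 + 1$, giving the required separation.

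The only delicate point I anticipate is the careful justification that a forward exit from $C_n$ can occur only at its top level (and the parallel observation that it is precisely the top half of tower~$n$, of height $h_n/2$, that the escaping coordinate must traverse before leaving tower~$n$). This rests entirely on keeping the distinction between tower~$n$ and its bottom half $C_n$ straight and on the strictly monotone level increase; once that is set up, both assertions are simply counts of levels, so I do not expect genuine difficulty beyond this bookkeeping.
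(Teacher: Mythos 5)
Your proof is correct and follows essentially the same route as the paper's: the first bound comes from the strictly increasing level inside $C_n$ (a tower of height $h_n/2$), and the separation comes from noting that the coordinate exiting $C_n$ at the end of a crossing must climb through the top half of tower~$n$ for the next $h_n/2$ steps. You merely make explicit the level-counting that the paper summarizes as ``by construction.''
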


\begin{proof}
  The first assertion is obvious since $C_n$ is a tower of height $h_n/2$. Consider the maximum element $j$ of an $n$-crossing for $x=(x_1,\ldots,x_d)$. Then there exists $1\le i\le d$ such that $T^{j}(x_i)\in C_n$, but $T^{j+1}(x_i)\notin C_n$. By construction, $T^{j+\ell}(x_i)\notin C_n$ for all $1\le \ell\le h_n/2$, hence $(\td)^{j+\ell}x\notin \cd_n$.
\end{proof}

\begin{lemma}
  \label{lemma:long-crossing}
Let $j\ge 0$ and $n\ge 2$ such that $(\td)^{j}x\in \cd_{n-1}$. Then $j,j+1,\ldots,j+h_{n-1}/2$ belong to the same $n$-crossing.
\end{lemma}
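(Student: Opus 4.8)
The plan is to reduce the statement to a pure membership claim: I will show that
$(\td)^{j+k}x\in\cd_n$ for every $k\in\{0,1,\ldots,h_{n-1}/2\}$. Once this is
established the conclusion is automatic, since by definition an $n$-crossing for $x$ is a
\emph{maximal} block of consecutive integers $m$ with $(\td)^m x\in\cd_n$; any family of
consecutive integers all lying (under $\td$-iteration) in $\cd_n$ must therefore be contained
in one such block. Writing $y\egdef(\td)^j x\in\cd_{n-1}$, it is enough to prove
$(\td)^k y\in\cd_n$ for $0\le k\le h_{n-1}/2$, and since these are nonnegative iterates only
forward images of $T$ are involved, so the hypothesis $j\ge0$ causes no difficulty.

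I would argue coordinate by coordinate. Fix $i$. Because $y_i\in C_{n-1}$ and $C_{n-1}$ is the
bottom half of tower~$(n-1)$, the point $y_i$ lies in some level $\ell$ of tower~$(n-1)$ with
$0\le\ell\le h_{n-1}/2-1$. The key structural fact is that on tower~$(n-1)$, away from its top
level, $T$ simply sends a point to the level immediately above it, and the cutting-and-stacking
passage from tower~$(n-1)$ to tower~$n$ leaves this upward motion inside each subcolumn
unchanged. Consequently, as long as the resulting level index does not exceed the top level
$h_{n-1}-1$, the point $T^k y_i$ sits at level $\ell+k$ of tower~$(n-1)$. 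For $k\le h_{n-1}/2$ we
have $\ell+k\le(h_{n-1}/2-1)+h_{n-1}/2=h_{n-1}-1$, so $T^k y_i$ stays inside tower~$(n-1)$ for all
such $k$ and all $i$. Since tower~$(n-1)\subset C_n$ by construction, this gives $T^k y_i\in C_n$
for every coordinate, i.e. $(\td)^k y\in\cd_n$; translating back through $y=(\td)^j x$ yields
$(\td)^{j+k}x\in\cd_n$ for $k=0,\ldots,h_{n-1}/2$, as required.

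The only delicate point, and the one I would take care to state explicitly, is the structural
observation that $T$ acts as ``move up one level'' on all of tower~$(n-1)$ except its top, and
that this behaviour is not disturbed by the later stacking steps. This is really just the
permanence of $T$ once it is defined in the cutting-and-stacking construction, but it deserves
emphasis because the whole argument rests on the level index $\ell+k$ never exceeding $h_{n-1}-1$.
The margin here is exactly tight: it comes from $C_{n-1}$ being precisely the bottom half of
tower~$(n-1)$, which is also why the number of guaranteed consecutive integers is exactly
$h_{n-1}/2+1$ and not more.
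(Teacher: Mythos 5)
Your proof is correct and follows essentially the same route as the paper's: each coordinate $T^j x_i$ lies in the bottom half $C_{n-1}$ of tower~$(n-1)$, so the next $h_{n-1}/2$ iterates of $T$ keep it inside tower~$(n-1)$, which is contained in $C_n$. The extra care you take in justifying the ``move up one level'' behaviour and the maximality of $n$-crossings is fine but not a different argument.
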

\begin{proof}
  For all $1\le i\le d$,  $T^{j}(x_i)\in C_{n-1}$, hence for all $1\le \ell\le h_{n-1}/2$, $T^{j+\ell}(x_i)$ belongs to tower~$(n-1)$, hence to $C_n$.
\end{proof}

For $x\in X^d$, let us define $n(x)$ as the smallest integer $n\ge1$ such that $x\in \cd_{n}$. Observe that $x\in\cd_n$ for each $n\ge n(x)$. In particular, for each $n\ge n(x)$, 0 belongs to an $n$-crossing for $x$, which we call the \emph{first $n$-crossing for $x$}. Observe also that the first $(n+1)$-crossing for $x$ contains the first $n$-crossing for $x$. Since $n$-crossings for $x$ are naturally ordered, we refer to the next $n$-crossing for $x$ after the first one (if it exists) as the \emph{second $n$-crossing for $x$}.

\begin{lemma}
  \label{lemma:special_n}
  Let $x=(x_1,\ldots,x_d)\in X^d$ such that, for any $n\ge n(x)$, there exist infinitely many $n$-crossings for $x$ contained in $\ZZ_+$. Then there exist infinitely many integers $n\ge n(x)+1$ such that the first $(n+1)$-crossing for $x$ also contains the second $n$-crossing for $x$. Moreover, for such an integer $n$, $t_n(x_i)\in\{1,2\}$ for each $i\in\{1,\ldots,d\}$, and for $j$ in the second $n$-crossing, we have $t_n(T^jx_i)=t_n(x_i)+1$. 
  \end{lemma}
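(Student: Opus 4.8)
The plan is to describe the orbit inside a single $(n+1)$-crossing and read off the ``moreover'' part geometrically, then to produce infinitely many good integers by a counting argument based on the nesting of the first crossings. Fix $n\ge n(x)$ and look at one $(n+1)$-crossing: there every coordinate climbs monotonically through tower~$(n+1)$, one level per unit of time, and tower~$(n+1)$ is the stack (left under middle under right, with one $(n+1)$-spacer inserted before the right subcolumn) of the three subcolumns of tower~$n$, on which $t_n$ equals $1,2,3$ respectively, while $C_n$ is exactly the bottom half of each of these subcolumns. Hence, during one $(n+1)$-crossing, a coordinate $x_i$ meets $C_n$ in at most three successive windows, on which $t_n$ takes the increasing values $1,2,3$. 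Suppose now that the first $(n+1)$-crossing contains the second $n$-crossing. If some $t_n(x_i)=3$, then after its (unique, topmost) $C_n$-window that coordinate leaves $\cd_{n+1}$ and cannot return before the $(n+1)$-crossing ends, leaving no room for a second $n$-crossing; hence every $t_n(x_i)\in\{1,2\}$. Since each coordinate ascends, in the second $n$-crossing it sits in a strictly higher subcolumn than in the first, so $t_n$ increases by at least $1$; to see it increases by exactly $1$ I would invoke Lemma~\ref{lemma:BtoSB} with $G_1=\{i:t_n(x_i)=1\}$ and $G_2=\{i:t_n(x_i)=2\}$, which identifies the configuration obtained after incrementing every $t_n$, together with Lemma~\ref{lemma:separated} to exclude the jump from subcolumn $1$ to subcolumn $3$.

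Excluding that jump is the delicate point. Here I would argue that a coordinate with $t_n(x_i)=1$ reaches the third subcolumn only about $2h_n$ steps after time $0$, whereas a coordinate with $t_n(x_i)=2$ reaches it about $h_n$ steps after time $0$; the $C_n$-windows there have width $h_n/2$ and so cannot overlap, so no $n$-crossing can have one coordinate of each type simultaneously in the third subcolumn. If instead all coordinates start in subcolumn $1$, their middle-subcolumn windows are translates of one another by at most $h_n/2-1$, hence pairwise overlap, so a common $C_n$-visit already occurs in the middle subcolumn, which is therefore the second $n$-crossing. Either way the second $n$-crossing is the one on which every $t_n(x_i)$ has increased by exactly~$1$.

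It remains to produce infinitely many such integers $n$, which is where the hypothesis enters. Set $n_0\egdef n(x)$ and let $F_m$, with right endpoint $\beta_m$, denote the first $m$-crossing for $x$. Applying Lemma~\ref{lemma:long-crossing} at $j=0$ (legitimate since $x\in\cd_{m-1}$ for $m>n_0$), the integers $0,\ldots,h_{m-1}/2$ all lie in $F_m$, so $\beta_m\ge h_{m-1}/2\to\infty$. Let $R_m$ be the number of $n_0$-crossings for $x$ contained in $[0,\beta_m]$. Since by hypothesis there are infinitely many $n_0$-crossings inside $\ZZ_+$ and $\beta_m\to\infty$, we get $R_m\to\infty$.

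On the other hand, every $n_0$-crossing is contained in a unique $m$-crossing (as $\cd_{n_0}\subset\cd_m$ and crossings are intervals), and no $n_0$-crossing straddles $\beta_{m-1}$ because $\beta_{m-1}+1\notin\cd_{m-1}\supseteq\cd_{n_0}$. Thus an $n_0$-crossing counted by $R_m$ but not by $R_{m-1}$ must lie entirely in the forward extension $(\beta_{m-1},\beta_m]$, hence inside an $(m-1)$-crossing occurring after $F_{m-1}$. If $m-1$ were not good, i.e. if the second $(m-1)$-crossing were not contained in $F_m$, there would be no such $(m-1)$-crossing, whence $R_m=R_{m-1}$. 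Therefore, were only finitely many integers good, $R_m$ would eventually be constant, contradicting $R_m\to\infty$; this produces infinitely many $n$ as required. I expect the main obstacle to be the geometric bookkeeping of the ``moreover'' part, and specifically ruling out the subcolumn jump $1\to3$, since all the relevant windows and gaps have comparable size $h_n/2$ and the estimates are tight; the counting argument for infinitely many good $n$, by contrast, should go through cleanly once $\beta_m\to\infty$ is established.
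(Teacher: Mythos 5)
Your proof is correct, but the two halves relate differently to the paper's argument. For the ``moreover'' part you do by explicit window bookkeeping inside tower~$(n+1)$ what the paper does by citing Lemma~\ref{lemma:tn}: the paper uses $x\in\cd_{n-1}$ to conclude $(\td)^{h_n+1}x\in\cd_n$, placing $h_n+1$ in the second $n$-crossing where every $t_n$ has incremented by one; your version instead computes the $C_n$-windows of each coordinate and shows the only possible simultaneous return is the incremented one. Both work; yours is more self-contained but longer. One small omission: your dichotomy ``mixed'' versus ``all coordinates start in subcolumn $1$'' misses the case where all $t_n(x_i)=2$; it is handled by the identical overlap argument applied to the subcolumn-$3$ windows (or trivially, since a type-$2$ coordinate has only one further $C_n$-window inside the $(n+1)$-crossing), but it should be stated. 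For the existence of infinitely many good $n$, your route is genuinely different. The paper, for each $m\ge n(x)+1$, takes the second $m$-crossing $\{s,\ldots,s+r\}$ and defines $n\ge m$ as the \emph{smallest} integer with $(\td)^jx\in\cd_{n+1}$ for all $0\le j\le s+r$; minimality forces a gap in $\cd_n$ inside $(0,s)$, so the first $(n+1)$-crossing contains two distinct $n$-crossings, and letting $m\to\infty$ gives unboundedly many good $n$ directly. Your argument instead monitors $R_m$, the number of $n_0$-crossings swept by the first $m$-crossing, shows $R_m\to\infty$ via Lemma~\ref{lemma:long-crossing}, and observes that $R_m$ can only increase when $m-1$ is good. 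Both are correct; the paper's construction is shorter and produces a good $n$ above any prescribed level on demand (which is convenient later, e.g.\ in the proof of Lemma~\ref{lemma:central_case}), while your counting argument is arguably more transparent about \emph{why} failure of the conclusion would freeze the orbit's returns and contradict conservativity-type recurrence.
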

    
\begin{proof}
Let $m\ge n(x)+1$, and let $\{s,s+1,\ldots,s+r\}$ be the second $m$-crossing for $x$. Define $n\ge m$ as the smallest integer such that $(\td)^{j}x\in\cd_{n+1}$ for each $0\le j\le s+r$. Then the $n$-crossing for $x$ containing zero is distinct from the $n$-crossing for $x$ containing $s$, and these two $n$-crossings are contained in the same $(n+1)$-crossing for $x$. Therefore the first $(n+1)$-crossing for $x$ contains both the first and the second $n$-crossings for $x$.

By Lemma~\ref{lemma:separated}, the first and the second $n$-crossings are separated by at least $h_n/2$, hence each coordinate has to leave $C_n$ between them. 
If we had $t_n(x_i)=3$ for some $i$, then $T^j(x_i)$ would also leave $C_{n+1}$ before coming back to $C_n$, which contradicts the fact that both $n$-crossings are in the same $(n+1)$-crossing. Hence $t_n(x_i)\in\{1,2\}$ for each $i$. 
Moreover, recall that $n\ge m \ge n(x)+1$, thus $x\in \cd_{n-1}$. Hence $x$ satisfies the assumptions of Lemma~\ref{lemma:tn}, with $\ell=n$. Therefore, $(\td)^{h_n+1}x\in \cd_n$, which proves that $h_n+1$ belongs to the second $n$-crossing. At time $h_n+1$, each coordinate has jumped to the following subcolumn: $t_n(T^{h_n+1}x_i)=t_n(x_i)+1$. The conclusion follows as $t_n$ is constant over an $n$-crossing.
\end{proof}

\subsection{Proof of Theorem~\ref{thm:msj}, conservative case}
Now we consider an integer $d\ge2$ such that the statement of Theorem~\ref{thm:msj} (in the conservative case) is valid up to $d-1$. Let $\sigma$ be a nonzero measure on $X^d$, which is boundedly finite, $\td$-invariant, and such that the system $(X^d,\sigma,\td)$ is ergodic and conservative. By Hopf's ergodic theorem, if $A\subset B\subset X^d$ with $0<\sigma(B)<\infty$, we have for $\sigma$-almost every point $x=(x_1,\ldots,x_d)\in X^d$
\begin{equation}
  \label{eq:Hopf}
  \dfrac{\sum_{j\in I}\ind{A}((\td)^jx)}{\sum_{j\in I}\ind{B}((\td)^jx)}
  \tend{|I|}{\infty} \dfrac{\sigma(A)}{\sigma(B)},
\end{equation}
where the sums in the above expression range over an interval $I$ containing 0.

Recall that $\cd_n\subset\cd_{n+1}$, and that $X^d=\bigcup_n \cd_n$. In particular, for $n$ large enough, $\sigma(\cd_n)>0$ (and $\sigma(\cd_n)<\infty$ because $\sigma$ is boundedly finite). By conservativity, this implies that almost every $x\in X^d$ returns infinitely often in $\cd_n$.

We say that $x\in X^d$ is \emph{typical} if, for all $n$ large enough so that $\sigma(\cd_n)>0$,
\begin{itemize}
  \item[(i)] Property~\eqref{eq:Hopf} holds whenever $A$ is an $n$-box and $B$ is $\cd_n$, 
  \item[(ii)] $(\td)^jx\in \cd_n$ for infinitely many integers $j\ge0$.
\end{itemize}
(In fact, it can be shown that (ii) follows from (i), but this requires some work, and we do not need this implication.)
We know that $\sigma$-almost every $x\in X^d$ is typical. Moreover, $\sigma$-almost every $x\in X^d$ satisfies
\begin{equation}
  \label{eq:n(x)}
  \sigma\left(\cd_{n(x)}\right)>0.
\end{equation}
From now on, we consider a fixed typical point $\typx=(\typx_1,\ldots,\typx_d)$ satisfying~\eqref{eq:n(x)}, and we will estimate the measure $\sigma$ along its orbit. 
 By~\eqref{eq:n(x)}, $\typx$ satisfies (ii) for all $n\ge n(\typx)$, thus $\typx$ satisfies the assumption of Lemma~\ref{lemma:special_n}. Hence we are in exactly one of the following two complementary cases.

\smallskip

\noindent {\bf Case 1:} There exists $n_1$ such that, for each $n\ge n_1$ satisfying the condition given in Lemma~\ref{lemma:special_n}, and for each $1\le i\le d$, $t_n(\typx_i)=t_n(\typx_1)$.
\smallskip

\noindent {\bf Case 2:} There exist a partition of $\{1,\ldots,d\}$ into two disjoint nonempty sets \[
                                                     \{1,\ldots,d\}=G_1\sqcup G_2,                                              
                                                                                                 \]
and infinitely many integers $n$ satisfying the condition given in Lemma~\ref{lemma:special_n} such that, for each $i\in G_1$, $t_n(\typx_i)=1$, and for each $i\in G_2$, $t_n(\typx_i)=2$.

\smallskip

Theorem~\ref{thm:msj} will be proved by induction on $d$ once we will have shown the following proposition.

\begin{prop}
  If Case 1 holds, then the measure $\sigma$ is a diagonal measure. 
  
  If Case 2 holds, then $\sigma$ is a product measure of the form 
  \[
    \sigma = \sigma_{G_1}\otimes \sigma_{G_2},
  \]
where, for $i=1,2$, $\sigma_{G_i}$ is a measure on $X^{G_i}$ which is boundedly finite, $T^{\times |G_i|}$-invariant, and such that the system $(X^{G_i},\sigma_{G_i},T^{\times |G_i|})$ is ergodic and conservative.
\end{prop}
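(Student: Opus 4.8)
The plan is to analyze the two cases separately, using the return structure of the orbit of the typical point $\typx$ together with the measure estimates coming from Hopf's theorem (i). In both cases the essential mechanism is Lemma~\ref{lemma:special_n}, which tells us that for infinitely many $n$ the first $(n+1)$-crossing for $\typx$ contains a full ``double crossing'', and that along the jump between these two $n$-crossings each coordinate $i$ moves from subcolumn $t_n(\typx_i)$ to subcolumn $t_n(\typx_i)+1$ via the displacement $(\td)^{h_n+1}$.

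For \textbf{Case 1}, I would show $\sigma$ is concentrated, on each $\cd_n$ with $n\ge n_1$, on a single $n$-diagonal, which is exactly Definition~\ref{def:diagonal}. The idea is that since all coordinates share the same value $t_n(\typx_i)=t_n(\typx_1)$ at the relevant scales, the map $S$ of Lemma~\ref{lemma:BtoSB} reduces to a genuine power of $\td$ (namely $(\td)^{h_n+1}$), so the second $n$-crossing is just the image of the first under a power of $\td$. Using (i), the $\sigma$-measure of each $n$-box visited along the first $n$-crossing is determined (up to the normalization $\sigma(\cd_n)$) by the empirical frequencies of visits, and these frequencies, read off from the crossing structure, force $\sigma|_{\cd_n}$ to be supported on the single $n$-diagonal traversed by the orbit of $\typx$. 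One then checks the supports are coherent as $n$ increases, yielding the required $n_0$ in the definition.

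For \textbf{Case 2}, the goal is to produce the product decomposition $\sigma=\sigma_{G_1}\otimes\sigma_{G_2}$. Here I would exploit the two-set displacement in Lemma~\ref{lemma:BtoSB}: for the infinitely many $n$ furnished by Case~2, the transformation $S$ acts as $T$ on the $G_1$-coordinates and as $\Id$ on the $G_2$-coordinates, so the single power $(\td)^{h_n+1}$ simultaneously advances the $G_1$-block while (at the level of $C_n$) returning the $G_2$-block to itself. Intuitively this lets the two blocks be moved independently along the orbit, which is the dynamical signature of a product measure. I would translate this into an asymptotic factorization of the joint empirical frequencies of $n$-boxes into a product of the $G_1$-frequencies and the $G_2$-frequencies; passing to the limit via (i) gives $\sigma(A_1\times A_2)\,\sigma(\cd_n)=\sigma(A_1\times X^{G_2})\,\sigma(X^{G_1}\times A_2)$ on boxes, and a standard approximation argument upgrades this to the product formula on all of $X^d$. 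Boundedly finiteness of each factor, its invariance under $T^{\times|G_i|}$, and ergodicity and conservativity of each factor system should then follow from the corresponding properties of $\sigma$ — conservativity being inherited because a product is conservative iff both factors are, and ergodicity of the factors because an invariant set for one factor lifts to an invariant set for $\td$.

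The main obstacle I expect is making the passage from ``the orbit of a single typical point does such-and-such'' to a statement about the measure $\sigma$ itself, rigorous and uniform. Hopf's theorem (i) only controls ratios of Birkhoff sums along $\typx$, so the delicate point is to argue that the empirical frequencies along the first $n$-crossing genuinely encode $\sigma|_{\cd_n}$ — including ruling out escape of mass to $n$-boxes not met by $\typx$ — and, in Case~2, to verify that the factorization of frequencies holds in the limit rather than merely approximately at each finite scale. This is where the careful crossing-separation estimates of Lemma~\ref{lemma:separated} and the nesting from Lemma~\ref{lemma:long-crossing} will have to be combined with (i) to control the error terms. The invocation of the product-decomposition criterion (Theorem~\ref{thm:product}) is presumably what furnishes the clean separation of variables in Case~2 and will likely carry the heaviest technical load.
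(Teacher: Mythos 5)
You have correctly identified the key ingredients (Lemma~\ref{lemma:special_n}, Lemma~\ref{lemma:BtoSB}, Hopf's ratio theorem, Theorem~\ref{thm:product}), but the mechanism you propose for Case~2 is not the one that works, and as stated it contains a genuine error. The counting relation furnished by Lemma~\ref{lemma:BtoSB} --- that the second $n$-crossing visits $SB$ exactly as many times as the first visits $B$ --- does not yield any factorization of empirical frequencies into a product of $G_1$-frequencies and $G_2$-frequencies. Your proposed intermediate identity $\sigma(A_1\times A_2)\,\sigma(\cd_n)=\sigma(A_1\times X^{G_2})\,\sigma(X^{G_1}\times A_2)$ cannot hold: each factor of the product one is trying to build has infinite total mass (Proposition~\ref{prop:construction_sigma}), so the right-hand side is $\infty\cdot\infty$ while the left-hand side is finite. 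What the counting relation actually gives, after feeding it into~\eqref{eq:Hopf} once for the first crossing alone and once for the union of the first two crossings, is that the ratio $\sigma(SB)/\sigma(B)$ equals a constant $c_m$ independent of the $m$-box $B$; splitting $m$-boxes into $(m+1)$-boxes shows $c_m=c_{m+1}$, whence $S_*\sigma=c\,\sigma$ on all Borel sets. This quasi-invariance of $\sigma$ under the partial shift $S$ (which is $T\times\Id$ in the coordinates $X^{G_1}\times X^{G_2}$) is precisely the non-singularity hypothesis of Theorem~\ref{thm:product}, and it is that theorem --- not an approximation argument on boxes --- that delivers the product decomposition together with the invariance, ergodicity and conservativity of the factors. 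Your sketch never isolates this quasi-invariance step, which is the whole content of the Case~2 argument.

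In Case~1 your outline is closer to the paper's, but it only treats the second $n$-crossing at the special scales $n$ provided by Lemma~\ref{lemma:special_n}, whereas Definition~\ref{def:diagonal} requires that, for a fixed large $m$, \emph{every} $m$-crossing of the typical orbit land on the same $m$-diagonal; only then does~\eqref{eq:Hopf} force $\sigma|_{\cd_m}$ onto a single diagonal. Bridging from the special scales down to an arbitrary $m$-crossing $J$ requires taking the smallest $n\ge m$ such that $\{0,\ldots,\sup J\}$ lies in a single $(n+1)$-crossing, observing that this $n$ is one of the scales to which the Case~1 hypothesis applies (so all coordinates cross the same number of $(n+1)$-spacers and the two $n$-crossings see the same $n$-diagonal), and then running an induction on $n-m$ to propagate the conclusion through all intermediate $m$-crossings. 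You flag the orbit-to-measure passage as the main obstacle, but this induction over scales is the concrete step missing from your argument.
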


\begin{proof}
  All $n$-crossings used in this proof are $n$-crossings for the fixed typical point $\typx$.
  
  First consider Case~1. Let $m\ge n_1$. We claim that every $m$-crossing passes through the same $m$-diagonal as the first $m$-crossing. Let $J\subset \NN$ be an arbitrary $m$-crossing. Define $n$ as the smallest integer $n\ge m$ such that all integers $j\in\{0,\ldots,\sup J\}$ are contained in the same $(n+1)$-crossing. Then $n$ satisfies the conditions of Lemma~\ref{lemma:special_n}: The $(n+1)$-crossing containing 0 contains (at least) two different $n$-crossings, the one containing 0 and the one containing the $m$-crossing $J$.  Since we are in Case~1, all coordinates have met the same number of $(n+1)$-spacers between the $n$-crossing containing 0 and the $n$-crossing containing $J$. Hence the $n$-diagonal where $\typx$ lies is the same as the $n$-diagonal containing $(\td)^{j}\typx$ for $j\in J$. Now we prove the claim by induction on $n-m$. If $n-m=0$ we have the result. Let $k\ge0$ such that the claim is true if $n-m\le k$, and assume that $n-m=k+1$. We consider the $n$-crossing containing 
0: It 
may contain several $m$-crossings, but by the induction hypothesis, all these $m$-crossings correspond to the same $m$-diagonal. 
  Now, we know that the $n$-crossing containing $J$ corresponds to the same $n$-diagonal as the $n$-crossing containing 0, thus all the $m$-crossings it contains correspond to the same $m$-diagonal as the $m$-crossing containing 0. Now, since we have chosen $\typx$ typical, it follows that the $m$-diagonal containing $\typx$ is the only one which is charged by $\sigma$. But this is true for all $m$ large enough, hence $\sigma$ is a diagonal measure.
  
  \smallskip
  
  Let us turn now to Case~2. Consider the transformation $S:X^d\to X^d$ defined as in Lemma~\ref{lemma:BtoSB} by
  \[
     S(y_1,\ldots, y_d) = (z_1,\ldots,z_d),\text{ where } z_i\egdef\begin{cases}
                                                                T y_i \text{ if }i\in G_1, \\
                                                                y_i \text{ if }i\in G_2.
                                                              \end{cases}
  \]
 Let us fix $m$ large enough so that $\sigma(\cd_{m-1})>0$. For each $m$-box $B$, denote by $n_B$ (respectively $n'_B$) the number of times the orbit of $\typx$ falls into $B$ along the first $n$-crossing (respectively the second).  We claim that there exists an $m$-box $B$ such that $SB$ is still an $m$-box, and $\sigma(B)>0$. Indeed, it is enough to take any $m$-box in $\cd_{m-1}$ with positive measure. For such an $m$-box $B$, we want now to compare $\sigma(B)$ and $\sigma(SB)$.
 
  Let $n>m$ be a large integer satisfying the condition stated in Case~2. 
  Partition the $m$-box $B$ into $n$-boxes: since $SB$ is also an $m$-box, for each $n$-box $B'\subset B$, $SB'$ is an $n$-box contained in $SB$, and we get in this way all $n$-boxes contained in $SB$. Let us fix such an $n$-box, and apply Lemma~\ref{lemma:BtoSB}: For each $j$ in the first $n$-crossing, we have
  \[
    (\td)^j\typx\in B' \Longleftrightarrow (\td)^{j+h_n+1}\typx\in SB',
  \]
and in this case, by Lemma~\ref{lemma:separated}, $j+h_n+1$ belongs to the second $n$-crossing. In the same way, for each $j$ in the second $n$-crossing, we have
  \[
    (\td)^j\typx\in SB' \Longleftrightarrow (\td)^{j-h_n-1}\typx\in B',
  \]
and in this case, by Lemma~\ref{lemma:separated}, $j-h_n-1$ belongs to the first $n$-crossing. Summing over all $n$-boxes $B'$ contained in $B$, It follows that
   \begin{equation}
    \label{eq:n_et_n'}
    \text{if both $B$ and $SB$ are $m$-boxes, }n'_{SB}=n_B.
  \end{equation} 
  Set
  \[
    N\egdef \sum_{B} n_B,\quad\text{and}\quad N'\egdef \sum_{B} n'_B,
  \]
  where the two sums range over all $m$-boxes $B$. Since we have chosen $\typx$ typical, and since the length of the first $n$-crossing go to $\infty$ as $n\to\infty$, we can apply~\eqref{eq:Hopf} and get, for any $m$-box $B$, as $n\to\infty$
  \begin{equation}
    \label{eq:Hopf-bis}
    \frac{n_B}{N}=\frac{\sigma(B)}{\sigma(\cd_{m})} + o(1),\quad\text{and}\quad\frac{n_B+n'_B}{N+N'}=\frac{\sigma(B)}{\sigma(\cd_{m})} + o(1).
  \end{equation}
  Since $N'\ge\sum n'_{SB}$ where the sum ranges over the set $\B_m$ of all $m$-boxes $B$ such that $SB$ is still an $m$-box, we get by~\eqref{eq:n_et_n'} 
  \[
    N' \ge \sum_{B\in\B_m} n_B.
  \]
Then, applying the left equality in~\eqref{eq:Hopf-bis} for all $B\in\B_m$, we obtain
\[
  \frac{N'}{N} \ge \frac{\sum_{B\in\B_m} \sigma(B)}{\sigma(\cd_{m})}+o(1).
\]
As we know that $\sum_{B\in\B_m} \sigma(B)>0$, it follows that $N'/N$ is larger than some positive constant for $n$ large enough, and we can deduce from~\eqref{eq:Hopf-bis} that, for all $m$-box $B$, we also have as $n\to\infty$
\[
   \frac{n'_B}{N'}=\frac{\sigma(B)}{\sigma(\cd_{m})} + o(1).
\]
Let $B\in\B_m$. Applying the above equation for $SB$ and the left equality in~\eqref{eq:Hopf-bis} for $B$, and using~\eqref{eq:n_et_n'}, we get, if $\sigma(B)>0$,
\[
  \frac{N}{N'}=\frac{\sigma(SB)}{\sigma(B)}+o(1).
\]
It follows that the ratio $\sigma(SB)/\sigma(B)$ does not depend on $B$. We denote it by $c_m$.
Moreover, observe that if $\sigma(B)=0$, we get $n_B/N\to 0$, hence also $n_B/N'=n'_{SB}/N'\to 0$, and $\sigma(SB)=0$. Finally, for all $B\in\B_m$, we have $\sigma(SB)=c_m\sigma(B)$.

Note that any box $B\in\B_m$ is a finite disjoint union of $(m+1)$-boxes in $\B_{m+1}$. This implies that $c_m=c_{m+1}$. Therefore, there exists $c>0$ such that, for all $m$ large enough and all $B\in\B_m$, 
\[
  \sigma(SB)=c\sigma(B).
\]
But, as $m\to\infty$, the finite partition of $X^d$ defined by all $m$-boxes in $\B_m$ increases to the Borel $\sigma$-algebra of $X^d$. Hence, for any measurable subset $B\subset X^d$, the previous equality holds.

A direct application of Theorem~\ref{thm:product} proves that $\sigma$ has the product form announced in the statement of the proposition. And since $\sigma$ is boundedly finite, the measures $\sigma_{G_1}$ and $\sigma_{G_2}$ are also boundedly finite.
\end{proof}

\subsection{Proof of Theorem~\ref{thm:msj}, dissipative case}

We consider now a nonzero measure $\sigma$ on $X^d$, which is boundedly finite, $\td$-invariant, and such that the system $(X^d,\sigma,\td)$ is ergodic and totally dissipative. Up to a multiplicative constant, this measure is henceforth of the form 
\[  \sigma =  \sum_{k\in\ZZ} \delta_{(\td)^k x}     \]
for some $x\in X^d$. And since we assume that $\sigma$ is boundedly finite, for each $n$ there exist only finitely many $n$-crossings for $x$. Now we claim that for $n$ large enough, there is only one $n$-crossing for $x$, which will show that $\sigma$ is a diagonal measure.

Let $n$ be large enough so that $x\in\cd_{n-1}$, and let $m$ be large enough so that all $n$-crossings for $x$ are contained in a single $m$-crossing. Assume that there is a second $m$-crossings for $x$. Then we consider the smallest integer $\ell$ such that the first and the second $m$-crossings are contained in a single $(\ell+1)$-crossing. As in the proof of Lemma~\ref{lemma:special_n}, we have $t_\ell(x_i)\in\{1,2\}$, so we can apply Lemma~\ref{lemma:tn}. 
We get $(\td)^{h_\ell+1}x\in\cd_n$, but $h_\ell+1$ is necessarily in the second $m$-crossing. This contradicts the fact that all $n$-crossings for $x$ are contained in a single $m$-crossing. A similar argument proves that there is no other $m$-crossing contained in $\ZZ_-$, and this ends the proof of the theorem.

\section{Diagonal measures}
\label{sec:diagonal}

The purpose of this section is to provide more information on $d$-dimensional diagonal measures introduced in Definition~\ref{def:diagonal}, and which play an important role in our analysis. We are going to prove that there exist exactly two classes of ergodic diagonal measures:
\begin{itemize}
  \item graph joinings arising from powers of $T$, as defined by~\eqref{eq:graph_joining};
  \item \emph{weird} diagonal measures, whose marginals are singular with respect to $\mu$.
\end{itemize}
Moreover, we will provide a parametrization of the family of ergodic diagonal measures, and a simple criterion on the parameter to decide to which class a specific measure belongs.

\subsection{Construction of diagonal measures}
Let $d\ge2$, and let $\sigma$ be a diagonal measure on $X^d$. We define $n_0(\sigma)$ as the smallest integer $n_0$ for which $\sigma(C_{n_0-1}^d)>0$, and such that, for any $n\ge n_0$, $\sigma$ gives positive measure to a single $n$-diagonal, denoted by $D_n(\sigma)$. 

\begin{definition}
  \label{def:consistent}
  Let $n_0\ge 1$, and for each $n\ge n_0$, let $D_n$ be an $n$-diagonal. We say that the family $(D_n)_{n\ge n_0}$ is \emph{consistent} if 
  \begin{itemize}
    \item $\cd_{n_0-1}\cap\bigcap_{n\ge n_0}D_n\neq \emptyset$,
    \item $D_{n+1}\cap\cd_n\subset D_n$ for each $n\ge n_0$.
  \end{itemize}
\end{definition}
 
 Obviously, the family $(D_n(\sigma))_{n\ge n_0(\sigma)}$ is consistent. 
 
\begin{definition}
  \label{def:seen}
  We say that $x\in X^d$ is \emph{seen} by the consistent family of diagonals $(D_n)_{n\ge n_0}$ if, for each $n\ge n_0$, either $x\notin C_n^d$ (which happens only for finitely many integers $n$), or $x\in D_n$. We say that $x\in X^d$ is \emph{seen} by the diagonal measure $\sigma$ if it is seen by the family $(D_n(\sigma))_{n\ge n_0(\sigma)}$.
\end{definition}
 
 Observe that, thanks to the first condition in the definition of a consistent family of diagonals, there always exist some $x\in\cd_{n_0-1}$ which is seen by the family. Moreover, if $\sigma$ is a diagonal measure, then
\begin{equation}
  \label{eq:seen}\sigma\Bigl(\left\{x\in X^d:\ x\text{ is not seen by }\sigma\right\}\Bigr)=0.
\end{equation}

\begin{lemma}
  \label{lemma:seen}
  If $x$ is seen by the consistent family of diagonals $(D_n)_{n\ge n_0}$, then for each $j\in\ZZ$, 
  $(\td)^jx$ is also seen by $(D_n)_{n\ge n_0}$.
\end{lemma}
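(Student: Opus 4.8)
The plan is to prove that the property of being seen is invariant under $\td$ by showing it suffices to treat the two generators $\td$ and $(\td)^{-1}$, and then to localize the verification at each scale $n$ separately. Fix $x$ seen by $(D_n)_{n\ge n_0}$ and set $y\egdef \td x$; I want to check that $y$ is seen, i.e. that for every $n\ge n_0$ with $y\in \cd_n$ we have $y\in D_n$. Since $\ZZ$ is generated by $+1$ and $-1$, iterating the single-step statement over all $j\in\ZZ$ gives the full claim by an immediate induction, so the real content is the one-step case.

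First I would fix $n\ge n_0$ and assume $y=\td x\in\cd_n$; the goal is $y\in D_n$. The key observation is that $D_n$ is an $n$-diagonal, that is, a maximal run $B,\td B,\ldots,(\td)^{\ell}B$ of $n$-boxes all contained in $\cd_n$, with $(\td)^{-1}B\not\subset\cd_n$ and $(\td)^{\ell+1}B\not\subset\cd_n$. I would split into two cases according to whether $x\in\cd_n$ or not. If $x\in\cd_n$, then because $x$ is seen we have $x\in D_n$, so $x\in(\td)^k B$ for some $0\le k\le \ell$; applying $\td$ gives $y\in(\td)^{k+1}B$, and since $y\in\cd_n$ by assumption and $(\td)^{k+1}B$ meets $\cd_n$, maximality forces $k+1\le\ell$, whence $y\in D_n$. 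The delicate case is when $x\notin\cd_n$ but $y=\td x\in\cd_n$: here $x$ lies just below the bottom of the $n$-diagonal through $y$, and I need to argue that this diagonal is still $D_n$ rather than some other $n$-diagonal not charged by $\sigma$.

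The hard part will be precisely this boundary case, and I would resolve it by climbing to a coarser scale where both $x$ and $y$ sit inside a common crossing. Concretely, I would choose $N>n$ large enough that $x\in\cd_N$ (possible since $x\in X^d=\bigcup_m\cd_m$, and $x\notin\cd_n$ happens only for finitely many $n$). Because $x$ is seen, $x\in D_N$, and since $D_N$ is an $N$-diagonal we have $y=\td x\in D_N$ as well (the single-step argument at scale $N$ applies, now with $x\in\cd_N$, which is the easy case already handled). Now I invoke the consistency of the family: iterating the inclusion $D_{m+1}\cap\cd_m\subset D_m$ from $m=N-1$ down to $m=n$ yields $D_N\cap\cd_n\subset D_n$. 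Since $y\in D_N$ and $y\in\cd_n$, this gives $y\in D_n$, as desired.

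Finally I would note that the backward step $y\egdef(\td)^{-1}x$ is handled symmetrically: the roles of $B$ and $(\td)^{\ell}B$ at the top of the diagonal are interchanged, and the same lift to a large scale $N$ together with the consistency relation $D_N\cap\cd_n\subset D_n$ disposes of the boundary case. Combining the forward and backward single-step results, an induction on $|j|$ shows that $(\td)^j x$ is seen for every $j\in\ZZ$, which is the statement of the lemma. The only genuine subtlety throughout is making sure the maximality clause in the definition of an $n$-diagonal is used correctly at the two ends, so that applying $\td$ never pushes a point out of the labeled diagonal while keeping it inside $\cd_n$; the consistency condition is exactly the tool that transfers membership from the safe large scale back down to scale $n$.
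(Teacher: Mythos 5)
Your proof is correct and follows essentially the same route as the paper's: the key mechanism in both is to pass to a scale $N$ (the paper's $m$) large enough that the relevant orbit points all lie in $\cd_N$, observe that the $N$-box of $x$ and its $\td$-images stay inside the single diagonal $D_N$, and then descend to scale $n$ via the iterated consistency inclusion $D_N\cap\cd_n\subset D_n$. The paper handles an arbitrary $j$ in one pass rather than reducing to the generators $\td^{\pm1}$ and inducting, but the substance is identical.
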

\begin{proof}
  Let $n\ge n_0$. Let $m\ge n$ be large enough so that $(\td)^ix$ belong to $\cd_m$ for each $0\le i\le j$ (or each $j\le i\le 0$). Consider the $m$-box $B$ containing $x$: Since $x$ is seen by $(D_n)_{n\ge n_0}$, $B\subset D_m$ and $(\td)^jB\subset D_m$.  Now, observe that an $m$-box is either contained in an $n$-box, or it is contained in $X^d\setminus \cd_n$. Hence, either $(\td)^j x\in (\td)^jB \subset\cd_n$, or $(\td)^j x\in (\td)^jB \subset   X^d\setminus \cd_n$. In the former case, $(\td)^j x\in (\td)^jB \subset D_n$ because $D_m\cap \cd_n\subset D_n$. This proves that $(\td)^jx$ is also seen by $(D_n)_{n\ge n_0}$.
\end{proof}

Let $(D_n)_{n\ge n_0}$ be a consistent family of diagonals. We want to describe the relationship between $D_n$ and $D_{n+1}$ for $n\ge n_0$. 

Let us consider an $n$-box $B$. For each $d$-tuple $\tau=(\tau(1),\ldots,\tau(d))\in\{1,2,3\}^d$,
\begin{equation}
  \label{eq:defB}B(\tau) \egdef \{x\in B:\ t_n(x_i)=\tau(i)\ \forall 1\le i \le d\}
\end{equation}
is an $(n+1)$-box. Moreover, $B$  is the disjoint union of the $3^d$ $(n+1)$-boxes $B(\tau)$.  Notice that if $B$ and $B'$ are two $n$-boxes included in the same $n$-diagonal, then $B(\tau)$ and $B'(\tau)$ are included in the same $(n+1)$-diagonal. Therefore, for each $n$-diagonal $D$ and each $d$-tuple $\tau\in\{1,2,3\}^d$, we can define the $(n+1)$-diagonal $D(\tau)$ as the unique $(n+1)$-diagonal containing $B(\tau)$ for any $n$-box $B$ included in $D$.

Let us fix $x\in \cd_{n_0-1}$ which is seen by $(D_n)_{n\ge n_0}$. For each $n\ge n_0$, since $x\in D_n\cap D_{n+1}$,  we get 
\[
  D_{n+1}=D_n(t_n(x_1),\ldots,t_n(x_d)).
\]

Moreover, we will see that some values for the $d$-tuple $(t_n(x_1),\ldots,t_n(x_d))$ are forbidden (see Figure~\ref{fig:diagonal}).  As a matter of fact, assume $\{1,2\}=\{t_n(x_i):\ 1\le i\le d\}$. We can apply Lemma~\ref{lemma:BtoSB}, and observe that the transformation $S$ used in this lemma acts as $T$ on some coordinates and as $\Id$ on others. Therefore, $x$ and $(\td)^{h_n+1}x$ belong to two different $n$-diagonals, which is impossible by Lemma~\ref{lemma:seen}. By a similar argument, we prove that the case $\{2,3\}=\{t_n(x_i):\ 1\le i\le d\}$ is also impossible. Eventually, only two cases can arise:
\begin{description}                                                                                                                                                                                                                                                                                                                                                                                                                                                                                                                                                                                             \item[Corner case] $\{1,3\}\subset \{t_n(x_i):\ 1\le i\le d\}$; then the first $(n+1)$-crossing for $x$ contains only one $n$-crossing for $x$.
\item[Central case] $t_n(x_1)=t_n(x_2)=\cdots = t_n(x_d)$; then the first $(n+1)$-crossing for $x$ contains three consecutive $n$-crossings for $x$, and $D_{n+1}=D_n(1,\ldots,1)=D_n(2,\ldots,2)=D_n(3,\ldots,3)$.
\end{description}

\begin{figure}[htp]
  \centering
  \includegraphics[width=10cm]{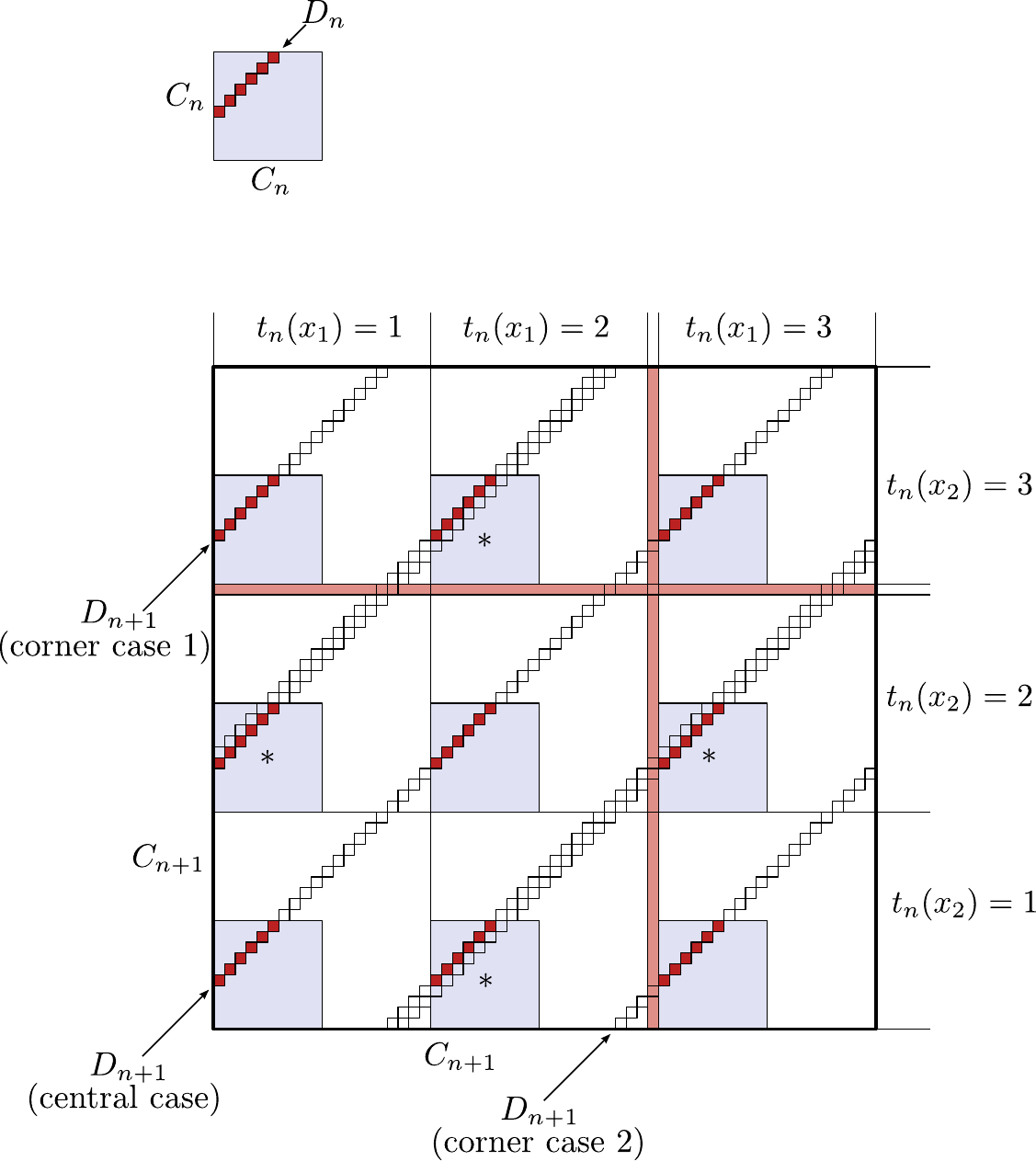}
  \caption{Relationship between $D_n$ and $D_{n+1}$ in the case $d=2$. The 4 positions marked with $\ast$ are impossible because the corresponding $(n+1)$-diagonal meets another $n$-diagonal.}
  \label{fig:diagonal}
\end{figure}

It follows from the above analysis that the diagonals $D_n$, $n\ge n_0$, are completely determined by the knowledge of $D_{n_0}$ and a family of parameters $(\tau_n)_{n\ge n_0}$, where each $\tau_n=(\tau_n(i), 1\le i\le d)$ is a $d$-tuple in $\{1,2,3\}^d$, satisfying either $\{1,3\}\subset \{\tau_n(i):\ 1\le i\le d\}$ (corner case), or $\tau_n(1)=\cdots = \tau_n(d)$ (central case).

\begin{lemma}
\label{lemma:central_case}
  If $\sigma$ is a diagonal measure, and if $(X^d,\td,\sigma)$ is conservative, then there are infinitely many integers $n$ such that the transition from $D_{n}(\sigma)$ to $D_{n+1}(\sigma)$ corresponds to the central case:
  \[
    D_{n+1}(\sigma)=D_{n}(\sigma)(1,\ldots,1).
  \]
\end{lemma}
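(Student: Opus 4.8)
The plan is to argue by contraposition: I will show that if only finitely many transitions are central, then the system $(X^d,\td,\sigma)$ fails to be conservative. Intuitively, a corner step does not increase the number of times the orbit crosses $\cd_{n_1}$, whereas a central step roughly triples it, so without infinitely many central steps the orbit cannot keep returning to a fixed bounded set. Concretely, suppose there is some $n_1\ge n_0(\sigma)$ such that for every $n\ge n_1$ the transition from $D_n(\sigma)$ to $D_{n+1}(\sigma)$ is a corner case. Since $n_1\ge n_0(\sigma)$ we have $0<\sigma(\cd_{n_1})<\infty$, so conservativity would force $\sigma$-almost every point of $\cd_{n_1}$ to return to $\cd_{n_1}$ for infinitely many positive times. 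I will contradict this by showing that $\sigma$-almost every point visits $\cd_{n_1}$ only finitely often for nonnegative times.

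The key observation is that, for a point $x$ seen by $\sigma$ (and $\sigma$-a.e.\ point is seen, by~\eqref{eq:seen}), the central/corner dichotomy at level $n$ is governed by the $d$-tuple $(t_n(x_1),\ldots,t_n(x_d))$ exactly as for the diagonals $D_n(\sigma)$ in the analysis preceding this lemma; in particular every seen point realizes the corner configuration at every level $n\ge n_1$. By the very description of the corner case, the first $(n+1)$-crossing for such an $x$ then contains only one $n$-crossing for $x$. I would then run the following induction on $N\ge n_1$: \emph{the first $N$-crossing for $x$ contains exactly one $n_1$-crossing for $x$}. The base case $N=n_1$ is immediate, and for the inductive step I use that the unique $N$-crossing inside the first $(N+1)$-crossing is the one containing $0$, namely the first $N$-crossing; this one already contains a single $n_1$-crossing by the induction hypothesis, and since every $n_1$-crossing sitting inside the first $(N+1)$-crossing lies inside some $N$-crossing inside it, the count stays equal to one.

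Finally I would convert this into finiteness of the number of returns. Suppose, for a seen point $x$, that $(\td)^{j_0}x\in\cd_{n_1}$ for some $j_0>0$ lying in an $n_1$-crossing distinct from the first. Choosing $N$ large enough that the finite segment $x,\td x,\ldots,(\td)^{j_0}x$ is contained in $\cd_N$ (possible since $X^d=\bigcup_n\cd_n$) forces $0$ and $j_0$ into a common first $N$-crossing, which would then contain at least two $n_1$-crossings, contradicting the induction above. Hence the orbit of $x$ meets $\cd_{n_1}$ only during its first $n_1$-crossing, which is finite by Lemma~\ref{lemma:separated}; applying the same reasoning with $n_2=\max(n_1,n(x))$ in place of $n_1$ handles an arbitrary seen point, since $\cd_{n_1}\subset\cd_{n_2}$. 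Thus $\sum_{j\ge0}\ind{\cd_{n_1}}((\td)^jx)<\infty$ for $\sigma$-a.e.\ $x$, contradicting conservativity and proving the lemma. I expect the step requiring the most care to be this bridging argument, ensuring that a hypothetical positive-time return is always captured inside some first $N$-crossing and hence collides with the first crossing, together with the bookkeeping that makes the corner property hold simultaneously for $\sigma$-almost every point relative to the fixed reference set $\cd_{n_1}$.
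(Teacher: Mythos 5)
Your argument is correct, but it takes a genuinely different route from the paper. The paper's proof is a three-line direct application of Lemma~\ref{lemma:special_n}: conservativity gives, for $\sigma$-a.e.\ $x$ (which is also seen by $\sigma$, by~\eqref{eq:seen}), infinitely many $n$-crossings in $\ZZ_+$, hence by that lemma infinitely many $n$ with $t_n(x_i)\in\{1,2\}$ for every $i$; at such levels the corner configuration $\{1,3\}\subset\{t_n(x_i)\}$ is impossible, so the transition must be central. You instead prove the contrapositive: if all transitions from some level $n_1$ on are corner, then every seen orbit meets $\cd_{n_1}$ only during its first crossing, so the system cannot be conservative. Your induction (the first $N$-crossing contains exactly one $n_2$-crossing) and the bridging step (capturing a hypothetical return inside a first $N$-crossing for $N$ large) are sound, and your reliance on the fact that a seen point must itself realize the corner tuple at a corner level is legitimate --- it is exactly the uniqueness of the occurrence of $D_n(\sigma)$ inside $D_{n+1}(\sigma)$ in the corner case, which the paper uses in the same way in Propositions~\ref{prop:construction_sigma} and~\ref{prop:compatible}. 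Both proofs ultimately rest on the same geometric kernel (a coordinate with $t_n=3$ must leave $C_{n+1}$ before re-entering $C_n$, so two $n$-crossings cannot share an $(n+1)$-crossing); the paper's version is shorter because Lemma~\ref{lemma:special_n} already packages that fact, while yours is longer but self-contained and in effect re-proves the dissipativity assertion of Proposition~\ref{prop:construction_sigma} for eventually-corner families, which is a small bonus.
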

\begin{proof}
 Since $(X^d,\td,\sigma)$ is conservative, for $\sigma$-almost all $x$, for any $n\ge n(x)$, there exist infinitely many $n$-crossings for $x$ in $\ZZ_+$. Moreover, $\sigma$-almost all $x$ is seen by $\sigma$. Applying Lemma~\ref{lemma:special_n} to such an $x$, we get that there are infinitely many integers $n$ for which the corner case does not occur, hence such that the transition from $D_{n}(\sigma)$ to $D_{n+1}(\sigma)$ corresponds to the central case.
\end{proof}

\begin{lemma}
  \label{lemma:intersection_of_n_boxes}
  Let $(\tau_m)_{m\ge m_0}$ be a sequence of $d$-tuples in $\{1,2,3\}^d$. We define a decreasing sequence of $m$-boxes by choosing an arbitrary $m_0$-box $B_{m_0}$ and setting inductively $B_{m+1}\egdef B_m(\tau_m)$. Then
  \[
    \bigcap_{m\ge m_0}B_m \neq \emptyset
  \]
  if and only if 
  \begin{equation}
    \label{eq:condition_tau}
    \text{for all }1\le i\le d, \text{ there exist infinitely many integers }m\text{ with } \tau_m(i)\in\{1,2\}.
  \end{equation}
\end{lemma}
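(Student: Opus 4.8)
The plan is to reduce the statement to a one-dimensional question about nested half-open intervals, one for each coordinate, and then to exploit the fact that the levels of the towers are closed on the left and open on the right. First I would record the coordinatewise geometry of the construction. Since $B_{m+1}=B_m(\tau_m)$ is obtained from $B_m$ by retaining, in each coordinate $i$, the points $x_i$ with $t_m(x_i)=\tau_m(i)$, and since cutting a level of tower~$m$ into its three subcolumns of equal width amounts to splitting the corresponding interval into its left, middle and right third (with $\tau_m(i)=1,2,3$ selecting the left, middle, right third respectively), the box $B_m$ is a product $B_m=I_1^{(m)}\times\cdots\times I_d^{(m)}$ of half-open intervals of length $3^{-m}$, and $I_i^{(m+1)}$ is exactly the $\tau_m(i)$-th third of $I_i^{(m)}$; in particular $I_i^{(m+1)}\subset I_i^{(m)}$. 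It follows that
\[
  \bigcap_{m\ge m_0}B_m = \prod_{i=1}^d \bigcap_{m\ge m_0} I_i^{(m)},
\]
so this set is nonempty if and only if, for each $i$, the nested intersection $\bigcap_{m} I_i^{(m)}$ is nonempty. Hence it suffices to treat a single coordinate and to show that $\bigcap_m I_i^{(m)}\neq\emptyset$ precisely when $\tau_m(i)\in\{1,2\}$ for infinitely many $m$.

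For this one-dimensional statement, write $I_i^{(m)}=[a_m,b_m)$ with $b_m-a_m=3^{-m}$. Because each $I_i^{(m+1)}$ is a third of $I_i^{(m)}$, the sequence $(a_m)$ is non-decreasing, $(b_m)$ is non-increasing, and $b_m-a_m\to 0$, so $a:=\lim_m a_m=\sup_m a_m$ satisfies $a_m\le a\le b_m$ for every $m$; the only question is whether $a<b_m$ for all $m$. If $\tau_m(i)=3$ for all $m\ge M$ (the "always rightmost third" regime), then $b_m=b_M$ for all $m\ge M$ while $a_m\uparrow b_M$, so no point can satisfy $a_m\le x<b_M$ for every $m\ge M$ and the intersection is empty. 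Conversely, if $\tau_m(i)\in\{1,2\}$ for infinitely many $m$, then for each $m$ I can choose $k\ge m$ with $\tau_k(i)\in\{1,2\}$; selecting the left or middle third strictly lowers the right endpoint, $b_{k+1}<b_k\le b_m$, and since $a\le b_{k+1}$ this gives $a<b_m$. Thus $a_m\le a<b_m$ for all $m$, so $a\in\bigcap_m I_i^{(m)}$ and the intersection is nonempty.

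The only genuinely delicate point, and the source of the asymmetry between $\{1,2\}$ and $\{3\}$ in~\eqref{eq:condition_tau}, is the half-openness of the levels: had the intervals been closed, the nested intersection would always be nonempty by compactness, regardless of the $\tau_m$. Everything else is bookkeeping showing that repeatedly taking the right third keeps the (open) right endpoint fixed while pushing the left endpoint up to it, which is exactly the obstruction preventing the common limit point from lying in every interval. I expect the write-up to be short; the main care needed is simply to make the identification of $t_m(x_i)\in\{1,2,3\}$ with the left/middle/right third explicit, since this is what converts the combinatorial hypothesis on $(\tau_m)$ into the geometric statement about endpoints.
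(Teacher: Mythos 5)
Your proposal is correct and follows essentially the same route as the paper: reduce to a single coordinate, identify $\tau_m(i)=1,2,3$ with the left/middle/right third of the half-open level, and observe that the nested intersection is empty exactly when the rightmost third is eventually always chosen. The paper states this dichotomy for nested levels in one line and you merely spell out the endpoint bookkeeping; there is no substantive difference.
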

\begin{proof}
  Recall that the levels of each tower in the construction of $T$ are intervals which are closed to the left and open to the right. If we have a decreasing sequence $(I_m)$ of intervals, where $I_m$ is a level of tower~$m$, then 
  \[ \bigcap_m I_m=\begin{cases}
                     \emptyset, \text{ if $I_{m+1}$ is the rightmost subinterval of $I_m$ for each large enough $m$,}\\
                     \text{a singleton, otherwise.}
                   \end{cases}
                   \]
   Since $\tau_m(i)$ indicates the subinterval chosen at step $m$ for the coordinate $i$, the conclusion follows. 
\end{proof}

\begin{lemma}
\label{lemma:consistent}
  Let $n_0\ge 2$. Let $D_{n_0}$ be an $n_0$-diagonal such that $D_{n_0}\cap \cd_{n_0-1}\neq \emptyset$.
  Let $(\tau_n)_{n\ge n_0}$ be a sequence of $d$-tuples  in $\{1,2,3\}^d$ satisfying either $\{1,3\}\subset \{\tau_n(i):\ 1\le i\le d\}$, or $\tau_n(1)=\cdots = \tau_n(d)$.
  Then the inductive relation $D_{n+1}\egdef D_n(\tau_n)$, $n\ge n_0$ defines a consistent family of diagonals if and only if Property~\eqref{eq:condition_tau} holds. 
\end{lemma}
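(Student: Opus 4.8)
The plan is to prove the equivalence in Lemma~\ref{lemma:consistent} by relating the two conditions in Definition~\ref{def:consistent} to the structure already established for the diagonals $D_n$. The second consistency condition, namely $D_{n+1}\cap\cd_n\subset D_n$ for each $n\ge n_0$, is essentially built into the very definition of the transition $D_{n+1}=D_n(\tau_n)$: recall that $D_n(\tau)$ was defined as the unique $(n+1)$-diagonal containing $B(\tau)$ for $n$-boxes $B\subset D_n$, and each $(n+1)$-box $B(\tau)$ is by construction a subset of the $n$-box $B\subset D_n$. Thus I would first verify that this inclusion holds automatically for \emph{any} choice of $(\tau_n)$ in the allowed set, regardless of Property~\eqref{eq:condition_tau}, by tracing through how $D_n(\tau)$ meets $\cd_n$. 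Consequently, the entire content of the equivalence reduces to the first consistency condition, $\cd_{n_0-1}\cap\bigcap_{n\ge n_0}D_n\neq\emptyset$.

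Next I would reduce this first condition to a statement about a single decreasing sequence of boxes so that Lemma~\ref{lemma:intersection_of_n_boxes} applies directly. Fix an $n_0$-box $B_{n_0}\subset D_{n_0}$ with $B_{n_0}\cap\cd_{n_0-1}\neq\emptyset$ (such a box exists by the hypothesis $D_{n_0}\cap\cd_{n_0-1}\neq\emptyset$), and define inductively $B_{n+1}\egdef B_n(\tau_n)$, so that $B_n\subset D_n$ for all $n$. The point $x\in\cd_{n_0-1}$ witnessing nonemptiness of the intersection $\bigcap_n D_n$ must, for each $n$, lie in some $(n+1)$-box of $D_{n+1}$ contained in the $n$-box of $D_n$ it already occupies; since the $\tau_n$ govern exactly which subcolumn is selected at each step, I would argue that a point of $\cd_{n_0-1}$ seen by the family lies precisely in such a nested sequence of boxes. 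This identifies $\cd_{n_0-1}\cap\bigcap_{n\ge n_0}D_n\neq\emptyset$ with $\bigcap_{m\ge n_0}B_m\neq\emptyset$ for an appropriate choice of the starting box, and Lemma~\ref{lemma:intersection_of_n_boxes} then translates the latter into Property~\eqref{eq:condition_tau}.

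The main obstacle I anticipate is the bookkeeping needed to pass from the full $n$-diagonal $D_n$ (a family of $n$-boxes strung along an orbit segment) down to the single coordinate-wise nested box sequence to which Lemma~\ref{lemma:intersection_of_n_boxes} applies. A diagonal $D_n$ contains several $n$-boxes, and I must be careful that the condition $\cd_{n_0-1}\cap\bigcap_n D_n\neq\emptyset$ is about whether \emph{some} point of $\cd_{n_0-1}$ persists through all the diagonals, not about a fixed box; the corner case, where $\{1,3\}\subset\{\tau_n(i)\}$, forces different coordinates into different subcolumns and thus requires checking that the box containing the candidate point is the relevant one. The clean way around this is to observe that a point $x\in\cd_{n_0-1}$ lies in $\bigcap_n D_n$ if and only if, for each $n$, the $n$-box containing $x$ lies in $D_n$, and that this $n$-box is determined coordinatewise by the sequence $(t_n(x_i))$ which must agree with $(\tau_n)$; hence such an $x$ exists exactly when the nested box sequence defined by $(\tau_n)$ has nonempty intersection. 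Once this reduction is made, Lemma~\ref{lemma:intersection_of_n_boxes} closes both directions of the equivalence immediately, since Property~\eqref{eq:condition_tau} is precisely its hypothesis.
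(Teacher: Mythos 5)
Your proposal is correct and follows essentially the same route as the paper: the paper's own (very terse) proof likewise disposes of the second consistency condition by appealing to the restrictions on the $d$-tuples (the preceding Figure-2 analysis, which shows the forbidden transitions are exactly those where $D_n(\tau)$ would meet a second $n$-diagonal), and reduces the first condition to Lemma~\ref{lemma:intersection_of_n_boxes} applied to a nested sequence of boxes $B_{n+1}=B_n(\tau_n)$. The one point to watch in your converse direction is that in the central case $(t_n(x_i))$ need not literally agree with $(\tau_n)$, since $D_n(1,\ldots,1)=D_n(2,\ldots,2)=D_n(3,\ldots,3)$; this is harmless under the normalization $\tau_n=(1,\ldots,1)$ adopted in the paper's parametrization, and the paper's proof glosses over it as well.
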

\begin{proof}
  Applying Lemma~\ref{lemma:intersection_of_n_boxes}, the first condition in the definition of a consistent family of diagonals is equivalent to Property~\eqref{eq:condition_tau}. The second condition comes from the restrictions made on the $d$-tuples.
\end{proof}

\begin{prop}
  \label{prop:construction_sigma}
  Let $n_0\ge2$. Let $(D_n)_{n\ge n_0}$ be a consistent family of diagonals.
  Then there exists a diagonal measure $\sigma$, unique up to a multiplicative constant, with $n_0(\sigma)\le n_0$, and for each $n\ge n_0$, $D_n(\sigma)=D_n$. 
  This measure satisfies $\sigma(X^d)=\infty$.
  
  If the transition from $D_n$ to $D_{n+1}$ corresponds infinitely often to the central case, then the system $(X^d, \td, \sigma)$ is conservative ergodic. Otherwise, it is ergodic and totally dissipative.
\end{prop}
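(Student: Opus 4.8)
The plan is to build $\sigma$ by prescribing its mass on boxes and to read off all three conclusions from that mass. The first observation is that $\td$-invariance forces every $n$-box contained in $D_n$ to receive one common value $a_n$, while every $n$-box meeting $\cd_n\setminus D_n$ must receive mass $0$, since $\sigma$ has to be concentrated on the points seen by the family (by~\eqref{eq:seen} and Lemma~\ref{lemma:seen}). Splitting a charged $n$-box $B$ into its $3^d$ children $B(\tau)$ and keeping the charged ones, the analysis preceding the statement shows that exactly three children are charged in the central case (the three constant types) and exactly one in the corner case; additivity then forces the recursion $a_{n+1}=a_n/3$ (central) and $a_{n+1}=a_n$ (corner), i.e.\ $a_n=3^{-c(n)}$ if $a_{n_0}=1$, where $c(n)$ counts the central steps below $n$. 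This both \emph{defines} the value of $\sigma$ on $\bigcup_n\mathcal F_n$ (the algebra generated by boxes) and, because the values were forced, gives uniqueness up to a constant. The refinement identity $\sigma(B)=\sum_\tau\sigma(B(\tau))$ is exactly the count just made, and the consistency condition $D_{n+1}\cap\cd_n\subset D_n$ guarantees that children of uncharged boxes stay uncharged; a Carathéodory/Kolmogorov extension along the refining partitions $\mathcal F_n\uparrow\mathcal B(X^d)$ then produces a boundedly finite Borel measure $\sigma$ (each $\sigma(\cd_n)$ being a finite sum of $a_n$'s).

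For invariance I would check $\sigma((\td)^{-1}B)=\sigma(B)$ on boxes. Since $\td$ merely shifts the charged boxes along the single ribbon $D_m$, the only defect at level $m$ comes from the unique first box of $D_m$: the set $(\td)^{-1}B\cap\cd_m$ is a union of level-$m$ boxes, and all but at most one of them are images under $\td^{-1}$ of charged boxes of $B$ carrying the matching mass, so $\sigma((\td)^{-1}B\cap\cd_m)$ differs from $\sigma(B)$ by at most $a_m$. As $(\td)^{-1}B\cap\cd_m\in\mathcal F_m$, continuity from below gives $\sigma((\td)^{-1}B)=\lim_m\sigma((\td)^{-1}B\cap\cd_m)$; when there are infinitely many central steps (equivalently $a_m\to0$) this equals $\sigma(B)$, while when only finitely many steps are central $\sigma$ will turn out to be counting measure on one orbit, for which invariance is automatic. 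For the total mass I pass to the first marginal $\sigma_1\egdef(\pi_1)_*\sigma$, which is $T$-invariant: if it is boundedly finite then Proposition~\ref{prop:d=1} gives $\sigma_1=c\mu$ with $c>0$, hence $\sigma(X^d)=\sigma_1(X)=\infty$; otherwise $\sigma_1$ is already infinite on some bounded set, so again $\sigma(X^d)=\infty$ (and nonzero $\sigma$ forces nonzero $\sigma_1$).

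Ergodicity holds in both regimes by the same argument. For an invariant set $E$, invariance of $\sigma$ together with the fact that all charged $n$-boxes lie in the single $\td$-orbit $D_n$ forces $\sigma(E\cap B)$ to be the same for every charged $n$-box $B$; hence $\EE[\ind{E}\mid\mathcal F_n]$ is one constant $\rho_n$ on the charged part of $\cd_n$. Restricting to each $\cd_m$ (where $\sigma$ is finite) and letting $n\to\infty$, upward martingale convergence gives $\ind{E}=\lim_n\rho_n$ a.e.\ on $\cd_m$, hence a.e.; being an a.e.\ constant in $\{0,1\}$, $E$ is trivial.

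Finally the dichotomy, which I expect to be the main obstacle, since the single quantity $a_n$ controls the vanishing of the boundary defect above, the presence of atoms, and the recurrence count at once. For a seen point $x$ the nested charged boxes $B_n(x)$ have diameter $3^{-n}\to0$ and $\sigma(B_n(x))=a_n$, so $\sigma(\{x\})=\lim_n a_n$ by continuity from above. If only finitely many steps are central, $a_n$ is eventually a positive constant, every seen point is an atom, $\sigma$ is purely atomic, and ergodicity forces the atoms to form a single $\td$-orbit; thus $\sigma$ is counting measure on that orbit, which is totally dissipative. If infinitely many steps are central, then $a_n\to0$, $\sigma$ is non-atomic, and I would prove conservativity by the recurrence count: letting $f(m)$ be the number of $n_0$-crossings inside one $m$-crossing of $x$, the central/corner description gives $f(n_0)=1$, $f(m{+}1)=3f(m)$ at a central step and $f(m{+}1)=f(m)$ at a corner step, so $f(m)=3^{c(m)}\to\infty$. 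Since the first $m$-crossings exhaust $\ZZ$ and each contains $f(m)$ visits to $\cd_{n_0}$, every seen point (hence $\sigma$-a.e.\ point) meets $\cd_{n_0}$ infinitely often, which rules out a dissipative part of positive measure and makes the system conservative. The delicate point is precisely to separate the two regimes cleanly first, because $a_m\to0$ is simultaneously what kills the boundary defect in the invariance step, what removes the atoms, and what drives $3^{c(m)}\to\infty$.
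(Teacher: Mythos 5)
Your overall architecture matches the paper's (define the mass $\alpha_n$ on boxes with the forced recursion $\alpha_{n+1}=\alpha_n$ or $\alpha_n/3$, extend, then read off infinite mass, ergodicity and the conservative/dissipative dichotomy), and two of your side arguments are legitimate variants: the paper gets $\sigma(X^d)=\infty$ from $\sigma(\cd_{n+1})\ge 2\sigma(\cd_n)$ rather than via the first marginal and Proposition~\ref{prop:d=1}, and it gets conservativity from ``continuous $+$ ergodic'' rather than from your recurrence count $f(m)=3^{c(m)}\to\infty$ (both work). But there is a genuine gap at the single sentence ``a Carath\'eodory/Kolmogorov extension along the refining partitions\dots then produces a boundedly finite Borel measure.'' This is not a formality: Carath\'eodory's theorem requires the finitely additive set function to be countably additive on the ring of finite unions of boxes, equivalently that every decreasing sequence $(R_k)$ in the ring with $\inf_k\sigma(R_k)>0$ has nonempty intersection, and this can genuinely fail here. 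The tower levels are half-open intervals, so a nested sequence of boxes that always descends into the rightmost child in some coordinate has \emph{empty} intersection (Lemma~\ref{lemma:intersection_of_n_boxes}) while carrying constant positive finitely-additive mass. In the corner case the descent is forced and Property~\eqref{eq:condition_tau} (via Lemma~\ref{lemma:consistent}) saves you; but at a central step the mass of a charged box $B$ splits over the three children $B(1,\ldots,1)$, $B(2,\ldots,2)$, $B(3,\ldots,3)$, and a careless choice of which child to follow could always land on $B(3,\ldots,3)$ and produce an empty intersection. The bulk of the paper's proof is exactly the construction of a nested sequence of \emph{persistent} boxes in which the child $B(1,\ldots,1)$ is selected at infinitely many levels (the $\tfrac32$-comparison argument); your proposal skips this entirely, and ``Kolmogorov extension'' is not an applicable substitute since there is no product/projective-limit structure with the needed inner regularity.

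Two smaller points. First, your invariance argument in the all-corner regime is circular as written: you defer invariance to ``$\sigma$ is counting measure on one orbit,'' but you derive the single-orbit structure from ergodicity, which you prove using invariance. This is repairable (in that regime $\alpha_n$ is eventually constant, each charged $n$-box contains exactly one seen point of mass $\lim_n\alpha_n$, and consecutive charged boxes of $D_n$ are $\td$-images of one another, so the seen points form a single orbit directly), but as stated the logical order is wrong. Second, in your conservativity step you should say explicitly why infinitely many visits of the full two-sided orbit to $\cd_{n_0}$ (a set of finite positive measure) excludes a dissipative part; the standard computation $\int_W\sum_k\ind{\cd_{n_0}}((\td)^k w)\,d\sigma(w)\le\sigma(\cd_{n_0})<\infty$ over a wandering set $W$ does it, but it is not automatic from the count alone.
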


\begin{proof}
  We first define $\sigma$ on the ring
  \[
    \mathscr{R}\egdef\{B\subset X^d:\ \exists n\ge1,\ B\text{ is a finite union of $n$-boxes}\}.
  \]
  Since we want to determine $\sigma$ up to a multiplicative constant, we can arbitrarily set $\sigma(\cd_{n_0})=\sigma(D_{n_0})\egdef 1$. As we want $\sigma$ to be invariant under the action of $\td$, this fixes the measure of each $n_0$-box: For each $n_0$-box $B$, 
  \[
    \sigma(B)\egdef\begin{cases}
                     \dfrac{1}{\text{number of $n_0$-boxes in $D_{n_0}$}} &\text{ if }B\subset D_{n_0},\\
                     0 &\text{otherwise.}
                   \end{cases}
  \]
  Now assume that we have already defined $\sigma(B)$ for each $n$-box, for some $n\ge n_0$, and that we have some constant $\alpha_n>0$ such that, for any $n$-box $B$,
  \[
    \sigma(B)=\begin{cases}
                     \alpha_n &\text{ if }B\subset D_{n},\\
                     0 &\text{otherwise.}
                   \end{cases}
  \]
We set $\sigma(B')\egdef 0$ for any $(n+1)$-box $B'\not\subset D_{n+1}$, and it remains to define the measure of $(n+1)$-boxes included in $D_{n+1}$. These boxes must have the same measure, which we denote by $\alpha_{n+1}$. 
  \begin{itemize}
    \item Either the transition from $D_n$ to $D_{n+1}$ corresponds to the corner case. Then each $n$-box contained in $D_n$ meets only one $(n+1)$-box contained in $D_{n+1}$, and we set $\alpha_{n+1}\egdef\alpha_n$.
    \item Or  the transition from $D_n$ to $D_{n+1}$ corresponds to the central case. Then each $n$-box contained in $D_n$ meets three  $(n+1)$-boxes contained in $D_{n+1}$,  and we set $\alpha_{n+1}\egdef\alpha_n/3$.
  \end{itemize}
  For any $R\in\mathscr{R}$ which is a finite union of $n$-boxes, we can now define $\sigma(R)$ as the sum of the measures of the $n$-boxes included in $R$. At this point, $\sigma$ is now defined as a finitely additive set function on $\mathscr{R}$.
  
  It remains now to prove that $\sigma$ can be extended to a measure on the Borel $\sigma$-algebra of $X^d$, which is the $\sigma$-algebra generated by $\mathscr{R}$. Using Theorems~F p.~39 and A p.~54 (Caratheodory's extension theorem) in~\cite{Halmos}, we only have to prove the following.
  \begin{claim*}
   If $(R_k)_{k\ge 1}$ is a decreasing sequence in $\mathscr{R}$ such that 
  $  \lim_{k\to\infty}\downarrow \sigma(R_k) > 0$,
  then $\bigcap_k R_k\neq \emptyset$.
 \end{claim*} 
Having fixed such a sequence $(R_k)$, we say that an $m$-box $B$ is \emph{persistent} if 
\[
  \lim_{k\to\infty}\downarrow \sigma(R_k\cap B) > 0.
\]
We are going to construct inductively a decreasing family $(B_m)_{m\ge m_0}$ where $B_m$ is a persistent $m$-box and 
\[
\emptyset\neq \bigcap_{m\ge m_0}B_m\subset \bigcap_{k} R_k.
\]
We first consider the case where the transition from $D_n$ to $D_{n+1}$ corresponds infinitely often to the central case. Choose $k_0$ large enough so that 
\[
  \sigma(R_{k_0}) <  \frac{3}{2}\lim_{k\to\infty}\downarrow \sigma(R_k).
\]
Then there exists $m_0$ such that $R_{k_0}$ is a finite union of $m_0$-boxes, and (choosing a larger $m_0$ if necessary), the transition from $D_{m_0}$ to $D_{m_0+1}$ corresponds to the central case. Let $B$ be a persistent $m_0$-box. Then $\sigma$ on $B$ is concentrated on the $(m_0+1)$-boxes $B(1,\ldots,1)$, $B(2,\ldots,2)$ and $B(3,\ldots,3)$. If $B(1,\ldots,1)$ is not persistent, we get
\begin{align*}
  0 < \lim_{k\to\infty}\downarrow \sigma(R_k \cap B) & = \lim_{k\to\infty}\downarrow \sigma(R_k \cap B(2,\ldots,2)) +  \lim_{k\to\infty}\downarrow \sigma(R_k \cap B(3,\ldots,3))\\
  & \le \sigma(R_{k_0} \cap B(2,\ldots,2)) + \sigma(R_{k_0} \cap B(3,\ldots,3)) \\
  & \le \sigma(B(2,\ldots,2)) + \sigma(B(3,\ldots,3)) \\
  & = \frac{2}{3} \sigma (B) = \frac{2}{3} \sigma (R_{k_0}\cap B).
\end{align*}
Therefore, there exists some persistent $m_0$-box $B_{m_0}$ such that $B_{m_0+1}\egdef B_{m_0}(1,\ldots,1)$ is also persistent. 
Indeed, otherwise we would have
\begin{align*}
  \sigma(R_{k_0}) &\ge \sum_{B \text{persistent $m_0$}-box} \sigma(R_{k_0}\cap B)\\
  & \ge \frac{3}{2}  \sum_{B \text{persistent $m_0$}-box}\lim_{k\to\infty}\downarrow \sigma(R_k\cap B)\\
  & = \frac{3}{2}\lim_{k\to\infty}\downarrow \sigma(R_k),
\end{align*}
which would contradict the definition of $k_0$. 

Assume that we have already defined $B_{m_i}$ and $B_{m_i+1}=B_{m_i}(1,\ldots,1)$ for some $i\ge 0$.
Then we choose $k_{i+1}$ large enough so that 
\[
  \sigma(R_{k_{i+1}}\cap B_{m_i+1}) <  \frac{3}{2}\lim_{k\to\infty}\downarrow \sigma(R_k\cap B_{m_i+1}).
\]
We choose $m_{i+1}>m_i+1$ such that $R_{k_{i+1}}$ is a finite union of $m_{i+1}$-boxes, and the transition from $D_{m_{i+1}}$ to $D_{m_{i+1}+1}$ corresponds to the central case. Then the same argument as above, replacing $R_k$ by $R_k\cap  B_{m_i+1}$, proves that there exists a persistent $m_{i+1}$-box $B_{m_{i+1}}\subset B_{m_i+1}$ such that $B_{m_{i+1}+1}\egdef B_{m_{i+1}}(1,\ldots,1)$ is also persistent. 

Now we can complete in a unique way our sequence to get a decreasing sequence $(B_m)_{m\ge m_0}$ of persistent boxes. Since we have $B_{m_i+1}=B_{m_i}(1,\ldots,1)$ for each $i\ge0$, Lemma~\ref{lemma:intersection_of_n_boxes} ensures that 
\[
  \bigcap_m B_m\neq\emptyset.
\]
It only remains to prove that $\bigcap_m B_m \subset\bigcap_k R_k$. Indeed, let us fix $k$ and let $\overline{m}$ be such that $R_k$ is a finite union of $\overline{m}$-boxes. In particular, $R_k$ contains all persistent $\overline{m}$-boxes, which implies
\[
  \bigcap_m B_m \subset B_{\overline{m}} \subset R_k.
\]

Now we consider the case where there exists $m_0\ge n_0$ such that, for $n\ge m_0$, the transition from $D_n$ to $D_{n+1}$ always correspond to the corner case. That is, there exists a family $(\tau_n)_{n\ge m_0}$ of $d$-tuples in $\{1,2,3\}$, with $\{1,3\}\subset\{\tau_n(i), 1\le i\le d\}$ for each $n\ge m_0$, such that $D_{n+1}=D_n(\tau_n)$. By Lemma~\ref{lemma:consistent}, property~\eqref{eq:condition_tau} holds for $(\tau_n)_{n\ge m_0}$. We will now construct the family $(B_m)_{m\ge m_0}$ of $m$-boxes satisfying the required conditions. Start with $B_{m_0}$ which is a persistent $m_0$-box (such a box always exists). Since the transition from $D_{m_0}$ to $D_{m_0+1}$ corresponds to the corner case, there is only one $(m_0+1)$-box contained in $D_{m_0+1}\cap B_{m_0}$, and this box is precisely $B_{m_0}(\tau_{m_0})$. Therefore this box is itself persistent, and defining inductively $B_{m+1}\egdef B_m(\tau_m)$ gives a decreasing family of persistent boxes. By Lemma~\ref{lemma:intersection_of_n_boxes}, $\
bigcap_{m\ge m_0} B_m\neq\emptyset$. We prove as in the preceding case that $\bigcap_m B_m \subset\bigcap_k R_k$. This ends the proof of the claim.

\medskip

This proves that $\sigma$ can be extended to a $\td$-invariant measure, whose restriction to each $\cd_n$, $n\ge n_0$, is by construction concentrated on the single diagonal $D_n$. And since $\cd_{n_0-1}\cap D_{n_0}\neq\emptyset$, we get $n_0(\sigma)\le n_0$. If $B$ is an $n$-box, then $(\td)^{h_n/2}B\subset\cd_{n+1}$. Moreover, by Lemma~\ref{lemma:separated}, $(\td)^{h_n/2}B\not\subset\cd_{n}$. 
It follows that  $(\td)^{h_n/2}D_n\subset\cd_{n+1}\setminus\cd_n$. But $\sigma\bigl((\td)^{h_n/2}D_n\bigr)=\sigma(D_n)$, hence $\sigma(\cd_{n+1})\ge 2\sigma(\cd_n)$. We conclude that $\sigma(X^d)=\infty$.

Now we want to show the ergodicity of the system 
$(X^d,\td,\sigma)$. Let $A\subset X^d$ be a $\td$-invariant measurable set, with $\sigma(A)\neq 0$. Let $n$ be such that $\sigma(A\cap \cd_n)>0$.  Given $\varepsilon>0$, we can find $m>n$ large enough such that there exists $\tilde A$, a finite union of $m$-boxes, with
\[
  \sigma \left( (A\vartriangle \tilde A)\cap\cd_n\right) < \varepsilon\,\sigma(A\cap \cd_n).
\]
Let $B$ be an $m$-box in $D_m$, and set $s_m\egdef \sigma(A\cap B)$: By invariance of $A$ under the action of $\td$, $s_m$ does not depend on the choice of $B$.
We have
\[
  \sigma(A\cap \cd_n) = \sum_{\stack{B\ m\text{-box in }D_m}{B\subset\cd_n}}\sigma(A\cap B) = s_m \cdot \left|
  \{B\ m\text{-box}:\ B\subset D_m\cap \cd_n\}\right|.
\]
On the other hand, we can write
\[
  s_m \cdot \left|
  \{B\ m\text{-box}:\ B\subset D_m\cap \cd_n\setminus\tilde A\}\right| \le \sigma \left( (A\vartriangle \tilde A)\cap\cd_n\right) < \varepsilon\,\sigma(A\cap \cd_n).
\]
It follows that
\[
  \dfrac{\Bigl|\{B\ m\text{-box}:\ B\subset D_m\cap \cd_n\setminus\tilde A\}\Bigr|}{\Bigl|
  \{B\ m\text{-box}:\ B\subset D_m\cap \cd_n\}\Bigr|} < \varepsilon,
\]
hence
\[
  \sigma(\tilde A\cap\cd_n) > (1-\varepsilon) \sigma(\cd_n),
\]
and finally
\[
  \sigma(A\cap\cd_n) > (1-2\varepsilon) \sigma(\cd_n).
\]
But this holds for any $\varepsilon>0$, which proves that $\sigma(A\cap\cd_n) = \sigma(\cd_n)$. Again, this holds for any large enough $n$, thus $\sigma(X^d\setminus A)=0$, and the system is ergodic.

We can observe that, if the central case occurs infinitely often, the measure $\alpha_n$ of each $n$-box on $D_n$ decreases to 0 as $n$ goes to infinity, which ensures that $\sigma$ is continuous. Therefore the conservativity of $(X^d,\td,\sigma)$ is a consequence of the ergodicity of this system. On the other hand, if the central case occurs only finitely many times, there exists $m_0$ such that for each $m\ge m_0$, $\alpha_m=\alpha_{m_0}>0$. It follows that $\sigma$ is purely atomic,  and by ergodicity of $(X^d,\td,\sigma)$, $\sigma$ is concentrated on a single orbit. 
\end{proof}

 \subsection{A parametrization of the family of  diagonal measures}
 \label{sec:parametrization}

  If $\sigma$ is a diagonal measure, by definition of $n_0(\sigma)$, the diagonal $D_{n_0(\sigma)}(\sigma)$ is initial in the sense given by the following definition.
  \begin{definition}
  Let $n_0\ge1$, and $D$ an $n_0$-diagonal. We say that $D$ is an \emph{initial} diagonal if
  \begin{itemize}
    \item Either there exist at least two $(n_0-1)$-diagonals which have non-empty intersection with $D$;
    \item Or $D$ has non-empty intersection with exactly one $(n_0-1)$-diagonal, but does not intersect $\cd_{n_0-2}$ (with the convention that $\cd_0=\emptyset$).
  \end{itemize}
 \end{definition}

 In Proposition~\ref{prop:construction_sigma}, it is clear that $n_0=n_0(\sigma)$ if and only if $D_{n_0}$ is initial.
 
 Now we are able to provide a canonical parametrization of the family of diagonal measures: we consider the set of parameters
 \[
   \D \egdef\Bigl\{(n_0, D,\tau)\Bigr\},
 \]
 where
 \begin{itemize}
    \item $n_0\ge 1$,
    \item $D$ is an initial $n_0$-diagonal;
    \item $\tau= (\tau_n)_{n\ge n_0}$, where for each $n\ge n_0$, $\tau_n\in\{1,2,3\}^d$ and satisfies either $\{1,3\}\subset\{\tau_n(i),\ 1\le i\le d\}$ (corner case), or $\tau_n(i)=1$ for each $1\le i\le d$ (central case);
    \item Property~\eqref{eq:condition_tau} holds for $(\tau_n)$.
 \end{itemize}
To each $(n_0,D,\tau)\in\D $, by Proposition~\ref{prop:construction_sigma} we can canonically associate an ergodic diagonal measure $\sigma_{(n_0,D,\tau)}$, setting $\sigma_{(n_0,D,\tau)}(\cd_{n_0})\egdef 1$. Conversely, any ergodic diagonal measure $\sigma$ can be written as
\[
  \sigma = \lambda\,\sigma_{(n_0,D,\tau)} 
\]
for some $(n_0,D,\tau)\in\D $, where $\lambda\egdef \sigma(\cd_{n_0(\sigma)})$, $n_0\egdef n_0(\sigma)$, and $D\egdef D_{n_0(\sigma)}(\sigma)$.

Note that, by construction, for each $n\ge1$, each $(n_0,D,\tau)\in\D$, and each $n$-box $B$, we have $\sigma_{(n_0,D,\tau)}(B)\le1$. Thus,
\begin{equation}
  \label{eq:borne_universelle}
  \forall n\ge1,\ \forall (n_0,D,\tau)\in\D,\ \sigma_{(n_0,D,\tau)}(\cd_n)\le \left(\dfrac{h_n}{2}\right)^d. 
\end{equation}

\subsection{Identification of graph joinings}

\begin{prop}
\label{prop:graph}
  Graph joinings of the form
\begin{equation}
  \label{eq:graph_joining}\sigma(A_1\times\cdots\times A_d)  = \alpha\, \mu (A_1\cap T^{-k_2}(A_2)\cap\cdots\cap T^{-k_d}(A_d))
\end{equation}
for some real $\alpha>0$ and some integers $k_2,\ldots,k_d$, are the diagonal measures $\sigma_{(n_0,D,\tau)}$ for which there exists $n_1\ge n_0$ such that, for $n\ge n_1$, $\tau_n(i)=1$ for each $1\le i\le d$.
\end{prop}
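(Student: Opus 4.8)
The plan is to prove the asserted equality of families by two inclusions, working throughout with the map $\phi\colon X\to X^d$, $\phi(x)\egdef(x,T^{k_2}x,\ldots,T^{k_d}x)$, for which the graph joining \eqref{eq:graph_joining} is exactly $\sigma=\alpha\,\phi_*\mu$. For the inclusion $\supseteq$ I would start from a diagonal measure $\sigma=\sigma_{(n_0,D,\tau)}$ with $\tau_n=(1,\ldots,1)$ for all $n\ge n_1$. The crucial point is to show that \emph{every} point $y$ seen by $\sigma$ satisfies $t_n(y_1)=\cdots=t_n(y_d)$ for all $n\ge n_1$. To get this, I would observe that for a diagonal measure each $(n+1)$-crossing of a seen point coincides with a full traversal of $D_{n+1}(\sigma)$, so the number of $n$-crossings it contains depends only on the pair $(D_n,D_{n+1})$ and not on the point: it equals $3$ in the central case and $1$ in the corner case. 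Since the corner/central dichotomy established before Lemma~\ref{lemma:central_case} rests only on Lemma~\ref{lemma:seen} and hence applies to every seen point, this point-independent count forces all seen points into the central configuration at each level $n\ge n_1$.

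Granting this, for $n\ge n_1$ the coordinates of a seen point $y$ share the same horizontal position in every level of tower~$n$, so they lie in a common column of tower~$n_1$, whence $y_j=T^{k_j}y_1$ with $k_j$ the height offset between coordinates $j$ and $1$ inside the box of $D_{n_1}(\sigma)$ containing $y$. This offset is independent of the box because $\td$ raises all coordinates by one level at once along a crossing, so a single tuple $(k_2,\ldots,k_d)$ serves for $\sigma$-almost every point and $\sigma$ is carried by the graph $\Gamma\egdef\phi(X)$. Setting $\nu\egdef(\pi_1)_*\sigma$, the relation $\pi_1\circ\td=T\circ\pi_1$ makes $\nu$ a $T$-invariant measure with $\sigma=\phi_*\nu$; a short computation with the box masses $\alpha_n$ shows $\nu(C_n)<\infty$, so $\nu$ is boundedly finite. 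Proposition~\ref{prop:d=1} then gives $\nu=\lambda\mu$, whence $\sigma=\lambda\,\phi_*\mu$ has the form \eqref{eq:graph_joining}.

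For the reverse inclusion I would start from $\sigma=\alpha\,\phi_*\mu$, which is automatically boundedly finite (since $\sigma(\cd_n)\le\alpha\,\mu(C_n)$) and ergodic, being isomorphic via $\phi$ to the ergodic system $(X,\mu,T)$. Fix $n$ with $h_n/2>\max_j|k_j|$ and take $\phi(x)\in\cd_n$, i.e. $x\in C_n$ and $T^{k_j}x\in C_n$ for all $j$. Because $C_n$ is the bottom half of tower~$n$ and the offsets are bounded by $h_n/2$, the finite orbit arc from $x$ to $T^{k_j}x$ stays inside tower~$n$ and meets no spacer, so $T^{k_j}x$ sits exactly $k_j$ levels above $x$ in the same column. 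Thus $\phi(x)$ lies in the $n$-box obtained from the level of $x$ by shifting coordinate $j$ up by $k_j$, and $t_n$ is constant across the coordinates of $\phi(x)$. These boxes are carried into one another by $\td$, hence form a single $n$-diagonal carrying $\sigma|_{\cd_n}$; so $\sigma$ is a diagonal measure, and the common value of $t_n$ on any seen base point shows the transition $D_n(\sigma)\to D_{n+1}(\sigma)$ is central. As this holds for all large $n$, the parameter satisfies $\tau_n=(1,\ldots,1)$ eventually, as wanted.

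The main obstacle is the crucial point of the first inclusion: upgrading from the easy observation that one chosen base point lies on a graph to the statement that \emph{all} $\sigma$-typical points lie on the \emph{same} graph, with one common tuple $(k_2,\ldots,k_d)$. This is precisely where I must exploit that the central/corner alternative is a global feature of the diagonals—encoded in the point-independent count of $n$-crossings inside an $(n+1)$-crossing—rather than a quantity that could vary from one point of the orbit to another. The remaining ingredients (the invariance and bounded finiteness of $\nu$, the absence of spacers in the converse, and the ergodicity of the graph joining) are routine.
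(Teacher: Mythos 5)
Your argument is correct, and the converse inclusion (graph joining $\Rightarrow$ diagonal measure with eventually central transitions) is essentially the paper's own: identify the charged $n$-boxes as $A\times T^{k_2}A\times\cdots\times T^{k_d}A$, observe they form a single $n$-diagonal, and note that $B\mapsto B(1,\ldots,1)$ preserves this form. The forward inclusion, however, is genuinely different. The paper never passes through a pointwise support statement: it tracks the mass $\alpha_n=\alpha_{n_1}/3^{n-n_1}$ of each $n$-box of $D_n(\sigma)$, identifies those boxes as $B_1\times T^{k_2}B_1\times\cdots\times T^{k_d}B_1$ with offsets stable under the central transition, and matches $\sigma(B)$ against $\alpha\,\mu(A_1\cap T^{-k_2}A_2\cap\cdots\cap T^{-k_d}A_d)$ box by box, which also produces the constant $\alpha=\alpha_{n_1}/\mu(F_{n_1})$ explicitly. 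You instead prove that every seen point satisfies $y_j=T^{k_j}y_1$, push $\sigma$ down to the first coordinate, and invoke Proposition~\ref{prop:d=1}; this is more conceptual, recycles the $d=1$ uniqueness result, and makes the support of the measure explicit, at the cost of two extra verifications you correctly flag (bounded finiteness of $(\pi_1)_*\sigma$, which does need the support-on-graph fact since $\sigma(C_n\times X^{d-1})$ is not a priori finite, and the consistency of the offsets across scales and across boxes). One remark on your key step: the crossing-count argument is valid, but it can be shortcut — for a seen point $y\in\cd_n$, its $(n+1)$-box is $B\bigl(t_n(y_1),\ldots,t_n(y_d)\bigr)$ and must lie on $D_{n+1}=D_n(1,\ldots,1)$, and since distinct admissible tuples other than the three constant ones yield distinct $(n+1)$-diagonals (this is exactly the content of Figure~2 and Lemma~\ref{lemma:seen}), the tuple is forced to be constant without counting crossings.
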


\begin{proof}
 Let $\sigma\egdef\sigma_{(n_0,D,\tau)}$, and assume that for $n\ge n_1$, $\tau_n(i)=1$ for each $1\le i\le d$. 
 Consider $n\ge n_1$, and let $B$ be an $n$-box in $D_{n}(\sigma)$. Then $B$ is of the form $B_1\times T^{k_2}B_1\times\cdots\times T^{k_d}B_1$ for some level $B_1$ of tower~$n$ and some integers $k_2,\ldots,k_d$. Moreover, $k_2,\ldots,k_d$ do not depend on the choice of $B$ in $D_{n}(\sigma)$. Let us also write $B$ as $T^{\ell_1}F_{n}\times \cdots\times T^{\ell_d}F_{n}$, where $F_n$ is the bottom level of tower~$n$. Then $k_i=\ell_i-\ell_1$ for each $2\le i\le d$. Now, recalling notation~\eqref{eq:defB}, consider $B(1,\ldots,1)$, which is an $(n+1)$-box in $D_{n+1}(\sigma)$. Then  $B(1,\ldots,1)=T^{\ell_1}F_{n+1}\times \cdots\times T^{\ell_d}F_{n+1}$, thus this $(n+1)$-box is of the form $B'_1\times T^{k_2}B'_1\times\cdots\times T^{k_d}B'_1$, for some level $B'_1$ in tower~$(n+1)$, and the \emph{same} integers $k_2,\ldots,k_d$ as above. By induction, this is true for any $n$-box in $D_n(\sigma)$ for any $n\ge n_1$. 
 As in the proof of Proposition~\ref{prop:construction_sigma}, let us denote by $\alpha_n$ the measure of each $n$-box in $D_{n}(\sigma)$. By hypothesis, all transitions from $n_1$ correspond to the central case hence for each $n\ge n_1$, $\alpha_n=\alpha_{n_1}/3^{n-n_1}$. 
 
 Fix $n\ge n_1$ and consider some $n$-box $B$, of the form $B=A_1\times A_2\times\cdots\times A_d$  for sets $A_i$ which are levels of tower~$n$. We have
\[
  \sigma(B) = \begin{cases}
                \alpha_{n_1}/3^{n-n_1} &\text{ if } A_1\cap T^{-k_2}(A_2)\cap\cdots\cap T^{-k_d}(A_d)=A_1\\
                0 &\text{ otherwise, that is if } A_1\cap T^{-k_2}(A_2)\cap\cdots\cap T^{-k_d}(A_d)=\emptyset.
              \end{cases}
\]
Observing that $\mu(A_1)=\mu(F_{n_1})/3^{n-n_1}$, we get
\[
  \sigma(A_1\times A_2\times\cdots\times A_d) = \alpha \mu (A_1\cap T^{-k_2}(A_2)\cap\cdots\cap T^{-k_d}(A_d)),
\]
with $\alpha\egdef \alpha_{n_1}/\mu(F_{n_1})$. Finally, the above formula remains valid if the sets $A_i$ are finite unions of levels of tower~$n$, then for any choice of these sets.

Conversely, assume that $\sigma$ is a graph joining of the form given by~\eqref{eq:graph_joining}.  Observe that if $A$ is a level of $C_n$, and if $|k|\le h_n$, then 
\[A\cap T^k A=
  \begin{cases}
    A &\text{ if } k=0,\\
    \emptyset&\text{ otherwise.}
  \end{cases}
\]
Take $n$ large enough so that for all $1\le i\le d$, $h_n/2 > |k_i|$. Let $B$ be an $n$-box, which can always be written as $B=A\times T^{k'_2}A_2\times\cdots\times T^{k'_d}A_d$ for some level $A$ of $C_n$ and some integers $k'_2,\ldots,k'_d$ satisfying $|k'_i|\le h_n/2$. Then 
\[ 
\sigma(B) = \alpha \mu(A\cap T^{k'2-k_2}(A)\cap\cdots\cap T^{k'd-k_d}(A)),
\]
which is positive if and only if for each $1\le i\le d$, $k_i=k'_i$. Hence $\sigma|_{\cd_n}$ is concentrated on a single diagonal, which is constituted by $n$-boxes of the form $A\times T^{k_2}(A)\times\cdots\times T^{k_d}(A)$. This already proves that $\sigma$ is a diagonal measure.  Moreover, if $B$ is such an $n$-box, then $B(1,\ldots,1)$ is an $(n+1)$-box of the same form, hence the transition from $n$ to $n+1$ corresponds to the central case.
\end{proof}

\begin{definition}
  We say that $x_1\in X$ is \emph{compatible} with the diagonal measure $\sigma_{(n_0,D,\tau)}$ if there exists $(x_2,\ldots,x_d)\in X^{d-1}$ such that $(x_1,\ldots,x_d)$ is seen by $\sigma_{(n_0,D,\tau)}$.
\end{definition}

\begin{prop}
\label{prop:compatible}
  Let $\sigma_{(n_0,D,\tau)}$ be a diagonal measure. If the set of $x_1\in X$  which are compatible with  $\sigma_{(n_0,D,\tau)}$ is of positive measure $\mu$, then $\sigma_{(n_0,D,\tau)}$ is a graph joining arising from powers of $T$, as defined by~\eqref{eq:graph_joining}.
\end{prop}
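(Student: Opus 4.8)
The plan is to prove the contrapositive. I will show that if the corner case occurs for infinitely many indices $n$, then the set $E$ of points $x_1\in X$ compatible with $\sigma_{(n_0,D,\tau)}$ satisfies $\mu(E)=0$. Combined with Proposition~\ref{prop:graph}, this yields the statement: if $\mu(E)>0$, then the corner case occurs only finitely often, i.e.\ there is some $n_1\ge n_0$ with $\tau_n(i)=1$ for all $i$ and all $n\ge n_1$, and then Proposition~\ref{prop:graph} identifies $\sigma$ as a graph joining of the form~\eqref{eq:graph_joining}. The whole argument is carried out in terms of the ternary itinerary $(t_n(x_1))_n$ of the first coordinate.

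First I would record the decisive constraint satisfied by compatible points at corner steps. Let $x=(x_1,\dots,x_d)$ be seen by $\sigma$, and let $n\ge n_0$ be an index for which the transition $D_n\to D_{n+1}$ is of corner type. For $n$ large enough that $x\in\cd_{n+1}$, the $(n+1)$-box containing $x$ is $B(t_n(x_1),\dots,t_n(x_d))$, where $B\in D_n$ is the $n$-box containing $x$, and it must lie in $D_{n+1}=D_n(\tau_n)$. I claim this forces $(t_n(x_1),\dots,t_n(x_d))=\tau_n$, hence $t_n(x_1)=\tau_n(1)$. Indeed, by the analysis preceding Lemma~\ref{lemma:central_case}, the tuple $(t_n(x_i))_i$ of a seen point is either constant (central case) or satisfies $\{1,3\}\subset\{t_n(x_i):\ 1\le i\le d\}$ (corner case); and the map $\tau\mapsto D_n(\tau)$ is injective on corner tuples, because the relative vertical offset in tower~$(n+1)$ between coordinate~$1$ and any coordinate~$i$ jumps by an amount comparable to $h_n$ whenever $\tau(i)$ changes subcolumn, so distinct corner tuples produce distinct $(n+1)$-diagonals. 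Consequently every compatible $x_1$ must satisfy $t_n(x_1)=\tau_n(1)$ at every corner index $n$ beyond some threshold.

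It then remains to estimate the measure of $F\egdef\{x_1\in X:\ t_n(x_1)=\tau_n(1)\text{ for every corner index }n\}$, which contains $E$. Fix $N\ge n_0$ and a level $L$ of $C_N$. Since each level of tower~$n$ is split into three equal subintervals to build tower~$(n+1)$, and $t_n(x_1)$ records which subinterval contains $x_1$, the maps $x_1\mapsto t_n(x_1)$, $n\ge N$, are, under the normalized restriction of $\mu$ to $L$, independent and uniformly distributed on $\{1,2,3\}$ (this is just the ternary expansion inside $L$). Because the corner case occurs infinitely often, there are infinitely many corner indices $n\ge N$, so the event defining $F\cap L$ imposes infinitely many independent constraints each of probability $1/3$; hence $\mu(F\cap L)=0$. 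Summing over the finitely many levels of $C_N$ gives $\mu(F\cap C_N)=0$, and letting $N\to\infty$ (using $C_N\uparrow X$) yields $\mu(F)=0$, whence $\mu(E)=0$.

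The delicate point is the forcing statement at corner indices, i.e.\ the injectivity of $\tau\mapsto D_n(\tau)$ on corner tuples; everything else is soft. The measure computation itself is robust and also explains the dichotomy: at a central index the first coordinate is free to follow any of the three subcolumns, so central indices impose no constraint on $x_1$, which is exactly why only the corner indices contribute and why finitely many of them leave a set of positive measure, consistent with the graph-joining case.
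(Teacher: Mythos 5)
Your proof is correct and follows essentially the same route as the paper's: compatibility forces $t_n(x_1)=\tau_n(1)$ at every sufficiently large corner index (the paper phrases this as ``there is only one occurrence of $D_n(\sigma)$ inside $D_{n+1}(\sigma)$'' in the corner case), infinitely many forced ternary digits confine $x_1$ to a $\mu$-null set, and Proposition~\ref{prop:graph} concludes. You merely argue contrapositively and make explicit the independence/uniformity of the digits $t_n(x_1)$, which the paper leaves implicit.
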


\begin{proof}
  Let $x_1$ be compatible with the diagonal measure $\sigma\egdef\sigma_{(n_0,D,\tau)}$, and let $(x_2,\ldots,x_d)\in X^{d-1}$ be such that $(x_1,\ldots,x_d)$ is seen by $\sigma$. Let $n\ge n_0$ be large enough so that $(x_1,\ldots,x_d)\in \cd_n$. Then 
  \[(x_1,\ldots,x_d)\in D_{n+1}(\sigma)=D_n(\sigma)(\tau_n(1),\ldots,\tau_n(d)).
      \]
 If we further assume that $(\tau_n(1),\ldots,\tau_n(d))\neq(1,\ldots,1)$, then the transition from $D_n(\sigma)$ to $D_{n+1}(\sigma)$ corresponds to the corner case, and there is only one occurrence of $D_n(\sigma)$ inside $D_{n+1}(\sigma)$. Since also $(x_1,\ldots,x_d)\in D_{n}(\sigma)$, it follows that $t_n(x_1)=\tau_n(1)$. 
  Therefore, if there exist infinitely many integers $n$ such that $(\tau_n(1),\ldots,\tau_n(d))\neq(1,\ldots,1)$, then the compatibility of $x_1$ with the diagonal measure $\sigma$ forces the value of $t_n(x_1)$ for infinitely many integers $n$. This implies that $x_1$ belongs to a fixed set which is $\mu$-negligible. 
  
  To conclude the proof, it is enough to apply Proposition~\ref{prop:graph}.
\end{proof}

\begin{remark}
\label{remark:weird}
  Taking $(n_0,D,\tau)\in\D$ for which the corner case occurs infinitely often, and considering the corresponding diagonal measure $\sigma_{(n_0,D,\tau)}$, we see that there exist ergodic diagonal measures which are not graph joinings. By Proposition~\ref{prop:compatible}, these measures are concentrated on sets $N_1\times N_2\times \cdots\times N_d$, where each $N_i$, $1\le i\le d$, is a $\mu$-negligible set. We call such a measure a \emph{weird} measure. 
  It is conservative whenever the central case occurs infinitely often. 
\end{remark}

\section{Joinings and consequences}
\label{sec:joinings}
Although weird measures have marginals which are singular with respect to $\mu$, Danilenko has shown in~\cite[Example~5.4]{RadonMSJ} that we can get some conservative $T\times T$-invariant measure on $X\times X$ with absolutely continuous marginals by taking an appropriate convex combination of weird measures.
However, if we restrict ourselves to true joinings (see the definition below), then this phenomenon does not occur, and only graph measures can appear as ergodic components.

Let $(Y_i,\B_i,\nu_i,S_i)$, $i=1,2$, be two infinite measure preserving dynamical systems.
We recall that a \emph{joining} between them is any $S_1\times S_2$-invariant measure $m$ on the Cartesian product $Y_1\times Y_2$, whose marginals are respectively $\nu_1$ and $\nu_2$.

\begin{prop}
  \label{prop:alphamu}
If $m$ is a joining between $\left(X,\A,\mu,T\right)$ and $\left(X,\A,\alpha\mu,T\right)$ for some $\alpha >0$ then $\alpha=1$ and $m$ is a convex combination of graph measures supported by powers of $T$.
\end{prop}
\begin{proof}
 Assume $m$ is such a joining between $\left(X,\A,\mu,T\right)$ and $\left(X,\A,\alpha\mu,T\right)$. Then $m$ is $T\times T$-invariant, and its marginals are $\mu$ and $\alpha\mu$ respectively. Observe this is also true for any of its ergodic components. Since no weird measures (thanks to Remark~\ref{remark:weird}) nor the product measure have such marginals, only graph measures supported by powers of $T$ can appear in the ergodic decomposition.
Therefore, there exist nonnegative numbers $a_{k}\in\ZZ$  such that the ergodic decomposition of $m$ writes
\[
m\left(A_{1}\times A_{2}\right)=\sum_{k\in\mathbb{Z}}a_{k} \, \mu\left(A_{1}\cap T^{-k}A_{2}\right).
\]
Considering the first marginal of $m$ which is $\mu$, we get $\sum_{k\in\mathbb{Z}}a_{k}=1$, and the second marginal gives $\sum_{k\in\mathbb{Z}}a_{k}=\alpha$
, thus $\alpha=1$.
\end{proof}

As an immediate consequence, we obtain:

\begin{prop}
  The centralizer of $T$ is reduced to the powers of $T$.
\end{prop}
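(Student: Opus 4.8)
The plan is to exploit the central structural result, Theorem~\ref{thm:product_of_graphs}, by encoding an element of the centralizer as a graph joining. Suppose $S:X\to X$ is a $\mu$-preserving transformation commuting with $T$. First I would consider the measure $\sigma$ on $X^2$ defined as the graph joining of $\mu$ along $S$, i.e. the pushforward of $\mu$ by $x\mapsto(x,Sx)$, so that $\sigma(A_1\times A_2)=\mu(A_1\cap S^{-1}A_2)$. Because $S$ commutes with $T$ and preserves $\mu$, this $\sigma$ is $T^{\times 2}$-invariant; it is boundedly finite since its projection to each coordinate is $\mu$, which is boundedly finite; and crucially both its marginals are exactly $\mu$, hence absolutely continuous with respect to $\mu$.

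Next I would apply Theorem~\ref{thm:product_of_graphs} to this $\sigma$ with $d=2$. The theorem tells us that $(X^2,\sigma,T^{\times 2})$ is conservative and that the ergodic components of $\sigma$ are products of graph joinings arising from powers of $T$. In the case $d=2$, the only partitions of $\{1,2\}$ are the trivial partition $\{\{1,2\}\}$ and $\{\{1\},\{2\}\}$. The latter would force an ergodic component to be a product measure $\mu\otimes\mu$ (up to a constant), but a product measure has both marginals equal to $\infty$-valued measures and, more to the point, cannot be a component of the graph of a single-valued map $S$, since $\sigma$ is concentrated on the graph $\{(x,Sx)\}$ which is not a product. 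So the surviving partition is the trivial one, and each ergodic component is a genuine $2$-dimensional graph joining of the form $\sigma(A_1\times A_2)=\alpha\,\mu(A_1\cap T^{-k}A_2)$ for some $\alpha>0$ and some $k\in\ZZ$.

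The heart of the argument is then to deduce that $\sigma$ itself is such a graph joining, and that the integer $k$ is the same across the ergodic decomposition. Since $\sigma$ is supported on the graph of $S$, its disintegration over the first coordinate is $\delta_{Sx_1}$ for $\mu$-almost every $x_1$. On the other hand, the graph joining with parameter $k$ disintegrates as $\delta_{T^k x_1}$. Matching these disintegrations, for $\mu$-almost every $x_1$ we must have $Sx_1=T^{k}x_1$ for a single fixed $k$ (the integrals of the ergodic components must reassemble to a point mass, which is only possible if the decomposition is supported on one value of $k$). This yields $S=T^k$ almost everywhere, and since both sides are everywhere-defined invertible transformations preserving $\mu$, the equality holds as transformations.

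The main obstacle I anticipate is the rigorous passage from ``each ergodic component is a graph joining $\sigma_{\pi(k)}$'' to ``$\sigma$ is supported on a single such graph,'' i.e. ruling out the possibility that $\sigma$ is a nontrivial mixture $\sum_k c_k\,\sigma_{\pi(k)}$ of graphs over different powers $T^k$. The resolution is precisely that $\sigma$ is the graph of a \emph{function}: for $\mu$-almost every $x_1$ there is exactly one second coordinate in the support, so the conditional measure $\nu_{x_1}$ is a single Dirac mass; but a nontrivial mixture would give $\nu_{x_1}=\sum_k c_k\,\delta_{T^k x_1}$, charging several distinct points $T^k x_1$ (these are distinct for $\mu$-a.e.\ $x_1$ since $T$ has no periodic points). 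Hence only one coefficient $c_k$ is nonzero, forcing $S=T^k$.
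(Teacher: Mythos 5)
Your proof is correct, but it follows a slightly different route than the paper's. The paper's own argument is shorter: it observes that the graph joining $\sigma_S(A\times B)=\mu(A\cap S^{-1}B)$ is not only $T\times T$-invariant and boundedly finite but also \emph{conservative ergodic} (the map $x\mapsto(x,Sx)$ conjugates $(X,\mu,T)$ with the system $(X^2,\sigma_S,T\times T)$, so ergodicity is inherited from $T$); being ergodic and visibly not proportional to a product measure, $\sigma_S$ must by Theorem~\ref{thm:msj} be a single $2$-dimensional diagonal measure, and Proposition~\ref{prop:diagonal} then identifies it as the graph of a power of $T$ since its marginals equal $\mu$. You instead bypass the ergodicity of $\sigma_S$ entirely: you invoke the ergodic-decomposition statement (Theorem~\ref{thm:product_of_graphs}), rule out the product component because the graph of $S$ is $\mu\otimes\mu$-null, and then pin down a single exponent $k$ by comparing the disintegration $\nu_{x_1}=\delta_{Sx_1}$ with the would-be mixture $\sum_k c_k\,\delta_{T^kx_1}$, using aperiodicity of $T$ to see that the points $T^kx_1$ are distinct. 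This last step is essentially the computation the paper performs inside the proofs of Propositions~\ref{prop:diagonal} and~\ref{prop:alphamu}, so you are trading the (easy, but unstated in your write-up as well) ergodicity of the graph joining for a disintegration argument; both are legitimate, and yours has the mild advantage of not needing to know in advance that $\sigma_S$ is ergodic, at the cost of redoing work the paper has already packaged into the ergodic case.
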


Proposition~\ref{prop:alphamu} also leads to a nice corollary, for which we need to recall from~\cite{Aaronson} the following definition.

\begin{definition}
  A \emph{law of large numbers} for a conservative, ergodic, measure preserving dynamical system $(Y,\B,\nu,S)$ is a function $L:\{0,1\}^\NN\to[0,\infty]$ such that
  for all $B\in\B$, for $\nu$-almost every $y\in Y$,
  \[ 
    L\left(\ind{B}(y),\ind{B}(Sy),\ldots\right) = \nu(B).
  \]
\end{definition}

Theorem 3.2.5 in~\cite{Aaronson} provides a sufficient condition for $S$ to have a law of large numbers, which is exactly the conclusion of Proposition~\ref{prop:alphamu}.

\begin{corollary}
\label{cor:law_of_large_numbers}
  The dynamical system $\left(X,\A,\mu,T\right)$ has a law of large numbers.
\end{corollary}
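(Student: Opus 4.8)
The plan is to obtain the corollary as an immediate application of Theorem~3.2.5 in~\cite{Aaronson}, using Proposition~\ref{prop:alphamu} to verify its hypothesis. First I would note that the system $(X,\A,\mu,T)$ is conservative and ergodic, as established by Adams, Friedman and Silva in~\cite{AFS1997} (this is the $d=1$ instance of their result that all Cartesian powers of $T$ are conservative ergodic). Conservativity and ergodicity are exactly the standing hypotheses under which a law of large numbers, in the sense of the preceding definition, is meaningful, so this preliminary check clears the way for invoking the cited theorem.

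Next I would recall the precise sufficient condition furnished by Theorem~3.2.5 of~\cite{Aaronson}: for a conservative ergodic infinite measure preserving system, a law of large numbers exists as soon as one knows that there is no joining between $(X,\A,\mu,T)$ and $(X,\A,\alpha\mu,T)$ for any $\alpha\in(0,\infty]$ with $\alpha\neq1$. The heuristic content is that such a joining would permit the orbit statistics $\ind{B}(S^n y)$ to converge, along a set of positive measure, to a value incompatible with $\nu(B)$, thereby obstructing any universal recovery function $L$; its absence is precisely what makes a uniform recovery possible. I would then simply invoke Proposition~\ref{prop:alphamu}, which asserts exactly this absence of joinings over the full range $(0,\infty]\setminus\{1\}$. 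Combining the two, the hypothesis of Aaronson's theorem is satisfied and the conclusion follows at once.

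The substantive difficulty does not lie in this final step, which is essentially a citation, but upstream in Proposition~\ref{prop:alphamu}, whose proof rests on the full classification of ergodic $\td$-invariant measures (Theorem~\ref{thm:msj}) together with the absolute-continuity analysis of Proposition~\ref{prop:diagonal}. The only genuine care needed here is to match the quantifier ranges: Aaronson's condition must be checked for \emph{all} $\alpha\neq1$, including the extremal value $\alpha=\infty$, and Proposition~\ref{prop:alphamu} is stated over exactly this range $(0,\infty]$ with $\alpha\neq1$, so the fit is exact and no further work is required.
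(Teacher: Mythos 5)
Your proposal is correct and follows exactly the paper's route: the corollary is an immediate application of Theorem~3.2.5 in~\cite{Aaronson}, whose hypothesis is precisely the conclusion of Proposition~\ref{prop:alphamu}. Your additional remarks on conservativity, ergodicity, and the quantifier range $\alpha\in(0,\infty]\setminus\{1\}$ are consistent with, and merely elaborate on, the same one-line argument given in the paper.
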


\begin{prop}
\label{prop:joining with Chacon}
Let $\left(Z,\mathcal{Z},\rho,R\right)$ be any dynamical system, and assume that there exists a joining $\left(X\times Z,\mathcal{A}\otimes\mathcal{Z},m,T\times R\right)$.
Then $\left(X,\A,\mu,T\right)$ is a factor of $\left(Z,\mathcal{Z},\rho,R\right)$.
\end{prop}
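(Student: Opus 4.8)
The plan is to reduce the statement to the structure of two-dimensional self-joinings of $(X,\mu,T)$ by forming the \emph{relatively independent self-joining} of $m$ over its second coordinate. First I would disintegrate $m$ over the factor $(Z,\mathcal Z,\rho,R)$: since the projection $\pi:X\times Z\to Z$ satisfies $\pi_*m=\rho$, the disintegration theorem yields a measurable family $(\kappa_z)_{z\in Z}$ of \emph{probability} measures on $X$ with $m=\int_Z \kappa_z\otimes\delta_z\,d\rho(z)$, and the $T\times R$-invariance of $m$ together with the $R$-invariance of $\rho$ gives the equivariance $\kappa_{Rz}=T_*\kappa_z$ for $\rho$-a.e. $z$. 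The point of disintegrating over $Z$ rather than over $X$ is that, $\pi_* m$ being exactly $\rho$, the conditional measures are honest probabilities even though $\mu$ is infinite.

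I would then set
\[
  \Lambda\egdef\int_Z \kappa_z\otimes\kappa_z\,d\rho(z)
\]
on $X\times X$. By the equivariance above, $\Lambda$ is $T\times T$-invariant, and since the $\kappa_z$ are probabilities both its marginals equal $\mu$ exactly, hence are absolutely continuous with respect to $\mu$. Moreover $\Lambda$ is boundedly finite: for a bounded set one has $\kappa_z(C_n)^2\le\kappa_z(C_n)$, whence $\Lambda(C_n\times C_n)\le\int_Z\kappa_z(C_n)\,d\rho=\mu(C_n)<\infty$. Thus $\Lambda$ is a genuine boundedly finite self-joining of $(X,\mu,T)$, and Theorem~\ref{thm:product_of_graphs} (through Proposition~\ref{prop:diagonal}) applies: $(X^2,\Lambda,T\times T)$ is conservative and its ergodic components are products of graph joinings arising from powers of $T$. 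In dimension two this means each component is either the graph joining $\Delta_k$ supported on $\{(x,T^kx)\}$ for some $k\in\ZZ$, or the product measure $\mu\otimes\mu$.

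Next I would rule out the product component exactly as in the proof of Proposition~\ref{prop:alphamu}: if $\mu\otimes\mu$ occurred with positive weight, the first marginal of $\Lambda$ would be infinite on every set of positive $\mu$-measure, since $(\mu\otimes\mu)(C_n\times X)=\mu(C_n)\mu(X)=\infty$, contradicting the fact that this marginal is the $\sigma$-finite measure $\mu$. Hence $\Lambda$ is concentrated on $\bigcup_{k\in\ZZ}\{(x,T^kx)\}$. Reading this back through the relatively independent structure, for $\rho$-a.e. $z$ the product measure $\kappa_z\otimes\kappa_z$ is concentrated on pairs lying in a common $T$-orbit, which forces $\kappa_z$ to be supported on a single $T$-orbit $O(z)$, a countable set.

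Finally, from this orbit support I would produce an actual factor map. The orbit $O(z)=\operatorname{supp}\kappa_z$ depends measurably and $R$-invariantly on $z$, as $O(Rz)=T\,O(z)=O(z)$, and on $O(z)$ the weights of $\kappa_z$ transform under $R$ precisely by the shift induced by $T$. Selecting measurably an origin in each orbit and using $\kappa_{Rz}=T_*\kappa_z$, I would define $\varphi:Z\to X$ with $\varphi\circ R=T\circ\varphi$ and check $\varphi_*\rho=\mu$, exhibiting $(X,\mu,T)$ as a factor of $(Z,\rho,R)$. \textbf{The main obstacle is precisely this last step}: when $\kappa_z$ is not a single Dirac mass (which genuinely occurs, e.g. for $m=\tfrac12\Delta_0+\tfrac12\Delta_{-1}$), the $X$-coordinate is \emph{not} itself $\mathcal Z$-measurable, and one must instead select a point of the orbit equivariantly, i.e. make a consistent choice of origin along the $T$-action. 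This is where the triviality of the centralizer of $T$ established above—the only transformations commuting with $T$ being its powers, so that the orbits carry no extra symmetry—must be invoked to guarantee that a measure-preserving, $R$-equivariant map $\varphi$ exists.
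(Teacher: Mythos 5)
Your construction coincides with the paper's own proof up to the last step: you form the same relatively independent self-joining $\Lambda=\widetilde m=\int_Z\kappa_z\otimes\kappa_z\,d\rho$, observe that both its marginals equal $\mu$, exclude the ergodic component $\mu\otimes\mu$ exactly as in Proposition~\ref{prop:alphamu}, and conclude that $\rho$-a.e.\ $\kappa_z$ is a discrete probability measure carried by a single $T$-orbit. The gap is in the construction of $\varphi$. You correctly isolate the difficulty (choosing a point of the orbit equivariantly), but the tool you propose --- the triviality of the centralizer of $T$ --- is neither necessary nor sufficient: knowing that every $\mu$-preserving transformation commuting with $T$ is a power of $T$ gives no recipe for picking an origin on a single orbit equipped with a probability measure, and you do not actually produce the map. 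The resolution is elementary and uses only that $\kappa_z$ is a probability measure with countable support: the set of atoms of $\kappa_z$ of maximal weight is finite, and among these finitely many points of the orbit $\{T^ix\}_{i\in\ZZ}$ (which is ordered like $\ZZ$, a finite subset of which has a well-defined least element independent of any choice of base point) one takes the earliest; this is the paper's ``highest weight, lowest place on the orbit'' selection. It is measurable in $z$ and automatically satisfies $\varphi\circ R=T\circ\varphi$, because $\kappa_{Rz}=T_*\kappa_z$ translates both the weights and the orbit order by one step. No rigidity property of $T$ enters at this point.

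A second omission: you write ``check $\varphi_*\rho=\mu$'' without argument, and this is not immediate since $\varphi(z)$ is only one atom of $\kappa_z$. Writing $\kappa_z=\sum_{i\in\ZZ}w_i(z)\,\delta_{T^i\varphi(z)}$, the relation $\kappa_{Rz}=T_*\kappa_z$ forces each weight function $w_i$ to be $R$-invariant, and then, using $\sum_iw_i\equiv1$,
\[
\rho\bigl(\varphi^{-1}A\bigr)=\int_Z\sum_{i\in\ZZ}w_i(z)\,\ind{A}\bigl(\varphi(z)\bigr)\,d\rho(z)
=\sum_{i\in\ZZ}\int_Z w_i(z)\,\ind{A}\bigl(T^i\varphi(z)\bigr)\,d\rho(z)=m(A\times Z)=\mu(A),
\]
where the middle equality comes from the substitution $z\mapsto R^iz$ together with the $R$-invariance of $\rho$ and of $w_i$. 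Both of these points must be supplied to complete your argument; with them, your proof becomes exactly the paper's.
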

\begin{proof}
Since the marginal of $m$ on the second coordinate is $\rho$, there exists a family $(\mu_z)_{z\in Z}$ of probability measures on $X$ (defined $\rho$-almost everywhere), such
that we have the following disintegration of $m$: for all $A\in\A$ and all $B\in\mathcal{Z}$,
\[
m(A\times B) = \int_B \mu_z(A)\, d\rho(z).
\]
Since $m$ is $T\times R$-invariant, we have $\rho$-almost everywhere
\begin{equation}
  \label{eq:mu_Rz}
  \mu_{Rz}=T_*(\mu_z).  
\end{equation}
We can then form the relatively independent joining of $\left(X\times Z,\mathcal{A}\otimes\mathcal{Z},m,T\times R\right)$
over $\left(Z,\mathcal{Z},\rho,R\right)$, that is:
\[
\left(X\times Z\times X,\mathcal{A}\otimes\mathcal{Z}\otimes\mathcal{A},m\otimes_{\mathcal{Z}}m,T\times R\times T\right),
\]
where
\[
m\otimes_{\mathcal{Z}}m\left(A_{1}\times B\times A_{2}\right)=\int_{B}\mu_{z}\otimes\mu_{z}\left(A_{1}\times A_{2}\right)\rho\left(dz\right),
\]
and extract from it a self-joining $\left(X\times X,\mathcal{A}\otimes\mathcal{A},\widetilde{m},T\times T\right)$
where
\[
\widetilde{m}\left(A_{1}\times A_{2}\right)=\int_{Z}\mu_{z}\otimes\mu_{z}\left(A_{1}\times A_{2}\right)\rho\left(dz\right).
\]
Then $\widetilde{m}$ is $T\times T$-invariant, and its marginals are both equal to $\mu$. 
As in the proof of Proposition~\ref{prop:alphamu}, we deduce that there exist nonnegative numbers $a_{k}\in\ZZ$ with $\sum_{k\in\mathbb{Z}}a_{k}=1$ such that the ergodic decomposition of $\widetilde{m}$ writes
\[
\int_{Z}\mu_{z}\otimes\mu_{z}\left(A_{1}\times A_{2}\right)\rho\left(dz\right)=\sum_{k\in\mathbb{Z}}a_{k} \, \mu\left(A_{1}\cap T^{-k}A_{2}\right).
\]
For $\rho$-a.e. $z\in Z$, the probability measure $\mu_{z}\otimes\mu_{z}$ is therefore supported
by the graphs of $T^{k}$, $k\in\mathbb{Z}$. In particular, $\mu_{z}$
is a discrete probability measure, and its support is necessarily contained in
a single $T$-orbit. This support can be totally ordered according
to the place on the orbit, thus
we can measurably choose one point $\varphi(z)$ on the support of $\mu_z$ by looking at the
point with the highest weight and the lowest place in the orbit (this
is well defined as the number of such points is finite). Since $\mu_z$ is supported by the $T$-orbit of $\varphi(z)$,
we have a family $(w_i)_{i\in\ZZ}$ of measurable functions from $Z$ to $[0,1]$ such that, for $\rho$-almost every $z$, 
\[
  \mu_z = \sum_{i\in\ZZ} w_i(z) \, \delta_{T^i\varphi(z)}.
\]
Then, the disintegration of $m$ becomes
\begin{equation}
  \label{eq:disintegration2}
  m(A\times B) = \sum_{i\in\ZZ} \int_B w_i(z) \ind{A}\bigl(T^i\varphi(z)\bigr)\,d\rho(z).
\end{equation}
Of course, since $\mu_z$ is a probability, we have $\sum_{i\in\ZZ} w_i(z) =1$, $\rho$-almost everywhere.
Moreover, from~\eqref{eq:mu_Rz}, we deduce that $\varphi\circ R=T\circ \varphi$, and that each function $w_i$ is $R$-invariant.
To show that $\varphi$ is a homomorphism between the dynamical systems $\left(Z,\mathcal{Z},\rho,R\right)$ and $\left(X,\A,\mu,T\right)$, it only remains to check
that $\varphi_*(\rho)=\mu$. But this comes from the following computation: for each $A\in\A$, we have
\begin{align*}
  \rho\bigl(\varphi^{-1}(A)\bigr) & = \int_Z \ind{A}\bigl(\varphi(z)\bigr)\,d\rho(z) \\
  & =  \int_Z  \sum_{i\in\ZZ} w_i(z) \ind{A}\bigl(\varphi(z)\bigr)\,d\rho(z)\\
  & =  \sum_{i\in\ZZ} \int_Z  w_i(R^i z) \ind{A}\bigl(\varphi(R^i z)\bigr)\,d\rho(z) \quad\text{(by $R$-invariance of $\rho$)}\\
  & =  \sum_{i\in\ZZ} \int_Z  w_i(z) \ind{A}\bigl(T^i\varphi(z)\bigr)\,d\rho(z) \\
  & = m(A\times Z)  \quad\text{(by~\eqref{eq:disintegration2})}\\
  & = \mu(A).
\end{align*}
\end{proof}

\begin{prop}[$T$ has no non-trivial factor]
  \label{prop:sans_facteur}
  Assume that $\left(Z,\mathcal{Z},\rho,R\right)$ is a  factor of $\left(X,\A,\mu,T\right)$. 
  Then any homomorphism $\pi:X\to Z$ between the two systems is in fact an isomorphism.
\end{prop}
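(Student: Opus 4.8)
The plan is to combine the two preceding propositions with the triviality of the centralizer. First I would encode the factor map $\pi$ as a joining between the two systems: consider the pushforward
\[
  m\egdef (\Id,\pi)_*\mu
\]
of $\mu$ by the map $x\mapsto(x,\pi(x))$. Since $\pi\circ T=R\circ\pi$, this measure is $T\times R$-invariant, its first marginal is $\mu$ and its second marginal is $\pi_*\mu=\rho$; hence $m$ is a joining of $\left(X,\A,\mu,T\right)$ and $\left(Z,\mathcal{Z},\rho,R\right)$. Applying Proposition~\ref{prop:joining with Chacon} to $m$, I obtain that $\left(X,\A,\mu,T\right)$ is, conversely, a factor of $\left(Z,\mathcal{Z},\rho,R\right)$: there is a homomorphism $\psi\colon Z\to X$ with $\psi_*\rho=\mu$ and $\psi\circ R=T\circ\psi$.

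Next I would look at the composition $\psi\circ\pi\colon X\to X$. It is $\mu$-preserving, because $(\psi\circ\pi)_*\mu=\psi_*(\pi_*\mu)=\psi_*\rho=\mu$, and it commutes with $T$, because
\[
  (\psi\circ\pi)\circ T=\psi\circ R\circ\pi=T\circ(\psi\circ\pi).
\]
Thus $\psi\circ\pi$ lies in the centralizer of $T$. By the proposition asserting that the centralizer of $T$ is reduced to the powers of $T$ (whose proof, via the associated graph joining and Proposition~\ref{prop:diagonal}, only uses that the commuting map is $\mu$-preserving, not that it is invertible), there exists $k\in\ZZ$ such that $\psi\circ\pi=T^k$ $\mu$-almost everywhere.

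Finally I would pass to $\sigma$-algebras to read off the conclusion. As $T^k$ is an invertible $\mu$-preserving transformation, $(T^k)^{-1}\A=\A$ modulo $\mu$. On the other hand, since $\psi$ is measurable,
\[
  (T^k)^{-1}\A=(\psi\circ\pi)^{-1}\A=\pi^{-1}\bigl(\psi^{-1}\A\bigr)\subset\pi^{-1}(\mathcal{Z})\subset\A,
\]
so that the factor $\sigma$-algebra $\pi^{-1}(\mathcal{Z})$ coincides with $\A$ modulo $\mu$, which is exactly the assertion that $\pi$ is an isomorphism. The only delicate points in this scheme are of a bookkeeping nature: verifying that the graph of $\pi$ is genuinely a joining so that Proposition~\ref{prop:joining with Chacon} applies verbatim, and checking that the centralizer result can be invoked for $\psi\circ\pi$, which is a priori only known to be $\mu$-preserving rather than invertible. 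I expect no essential difficulty beyond these verifications, since all of the hard analytic work is already contained in Propositions~\ref{prop:joining with Chacon} and~\ref{prop:diagonal} and in the identification of the centralizer.
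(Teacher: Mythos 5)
Your proof is correct, but it closes the argument by a genuinely different route than the paper. Both proofs start from the same object, the graph joining $m(A\times B)=\mu(A\cap\pi^{-1}B)$ of the two systems. The paper then re-enters the \emph{proof} of Proposition~\ref{prop:joining with Chacon}: it disintegrates $m$ over $Z$, writes $\mu_z=\sum_i w_i(z)\,\delta_{T^i\varphi(z)}$ with $R$-invariant (hence constant) weights, and uses the specific fact that $\mu_z$ is carried by the fibre $\pi^{-1}(z)$ to rule out $w_i>0$ for $i\neq0$ (via ergodicity of $T^i$), so that $\mu_z=\delta_{\varphi(z)}$ and $\varphi$ is exhibited as the explicit inverse of $\pi$. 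You instead use Proposition~\ref{prop:joining with Chacon} only as a black box to produce the homomorphism $\psi:Z\to X$, then feed $\psi\circ\pi$ into the centralizer result to get $\psi\circ\pi=T^k$, and finish with a $\sigma$-algebra computation. Your two ``delicate points'' are indeed the only things to check, and both go through: the centralizer proof builds the measure $\mu(A\cap S^{-1}B)$ and applies Theorem~\ref{thm:msj} and Proposition~\ref{prop:diagonal} without ever using invertibility of $S$, only that $S$ is measurable, $\mu$-preserving and commutes with $T$; and $(\Id,\pi)_*\mu$ is a bona fide joining. Your approach is more modular and avoids re-deriving the disintegration, at the cost of ending with a measure-algebra statement ($\pi^{-1}\mathcal Z=\A\bmod\mu$) rather than an explicit inverse; if you want the latter, note that your own identity yields it: from $\psi\circ\pi=T^k$ one gets $\pi\circ\psi=R^k$ $\rho$-a.e.\ (push the relation $\pi\circ\psi\circ\pi=R^k\circ\pi$ forward by $\pi$), so $T^{-k}\circ\psi$ is a two-sided inverse of $\pi$ mod~$0$, matching the paper's $\varphi$.
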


\begin{proof}
  To any homomorphism $\pi:X\to Z$, we can associate the joining $\Delta_\pi$ of the two systems defined by
  \[
    \Delta_\pi(A\times B)\egdef \mu(A\cap \pi^{-1}B)
  \]
  for any $A\in\A$, $B\in\mathcal{Z}$. Let us repeat the construction made in the proof of Proposition~\ref{prop:joining with Chacon} with $m=\Delta_\pi$, and use the same notations as in this proof.
  Since $T$ is ergodic, $R$ is also ergodic, hence the weights $w_i(z)$, $i\in\ZZ$, which are $R$-invariant, are $\rho$-almost everywhere constant. By construction, $w_0>0$, and we claim that 
  for $i\neq0$, $w_i=0$. Indeed, otherwise we would have, for $\rho$-almost all $z$,
  $z=\pi(\varphi(z))=\pi(T^i \varphi(z))$. This would imply that, for $\mu$-almost all $x$, $\pi(x)=\pi(T^ix)$, hence $\pi$ would be constant as $T^i$ is ergodic. This is impossible because $\left(Z,\mathcal{Z},\rho,R\right)$ cannot be reduced to a single point system (since $\rho$ is $\sigma$-finite). 
  
  We conclude that the conditional measure $\mu_z$ is $\rho$-almost everywhere the Dirac mass at $\varphi(z)$. Therefore, $\pi$ is inversible, and its inverse is $\varphi$. 
\end{proof}

\begin{remark}
  It is easily seen that all the results proved in Section~\ref{sec:joinings} are valid for 
  any dynamical system $\left(X,\A,\mu,T\right)$ for which the conclusion of Proposition~\ref{prop:alphamu} holds.
  Concerning Corollary~\ref{cor:law_of_large_numbers}, it is known in fact that Chacon infinite transformation admits a \emph{measurable} law of large numbers: this is a consequence 
  of Theorem~3.3.1 in~\cite{Aaronson}, and the fact that Chacon infinite transformation is rationally ergodic~\cite{Silva_et_al2015}. 
\end{remark}

\begin{appendix}

\section{Product theorem}

\begin{theo}
\label{thm:product}
  Let $X$ and $Y$ be two standard Borel measurable spaces. Let $T\ :\ X\to X$ and $S\ : Y\to Y$ be invertible, bi-measurable transformations. 
  Let $\sigma$ be a $\sigma$-finite measure on $X\times Y$ satisfying 
  \begin{itemize}
    \item there exist $X_0\subset X$ and $Y_0\subset Y$ with $0<\sigma(X_0\times Y_0)<\infty$,
    \item $\sigma$ is $T\times S$-invariant,
    \item the dynamical system $(X\times Y,T\times S,\sigma)$ is conservative and ergodic,
    \item $\Id\times S$ is non-singular with respect to $\sigma$.
  \end{itemize}
  
  Then, $\sigma$ is in fact $\Id\times S$-invariant, and there exist two measures $\mu$ and $\nu$ respectively on $X$ and $Y$, invariant by $T$ and $S$,  such that $\sigma=\mu\otimes\nu$. Moreover, the dynamical systems $(X,\mu, T)$ and $(Y,\nu,S)$ are conservative and ergodic.
  \end{theo}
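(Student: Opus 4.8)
The plan is to work with the commuting pair $a\egdef T\times\Id$ and $b\egdef\Id\times S$, whose product $ab=T\times S$ preserves $\sigma$ and acts conservatively and ergodically. First I would note that $a$ is nonsingular, since $a=(T\times S)\circ(\Id\times S^{-1})$ is the composition of the $\sigma$-preserving map $T\times S$ with the nonsingular map $\Id\times S^{-1}$. Let $\Phi\egdef d(\sigma\circ b)/d\sigma$ be the Radon--Nikodym derivative of $b$. Because $b$ commutes with the $\sigma$-preserving transformation $T\times S$, the function $\Phi$ is $(T\times S)$-invariant, so ergodicity forces it to be a constant $\beta>0$; the same argument gives that the derivative of $a$ is a constant $\alpha>0$, and the cocycle identity applied to $ab$ (which preserves $\sigma$) yields $\alpha\beta=1$.

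I would then recover the product structure by disintegrating $\sigma$ along the first projection $\pi_X\colon X\times Y\to X$. Choosing a probability measure $P$ equivalent to $\sigma$ and disintegrating $P$ over $\pi_X$, I transport the fibre densities back to write $\sigma=\int_X \nu_x\,d\bar P(x)$, where $\bar P\egdef(\pi_X)_*P$ and each $\nu_x$ is a $\sigma$-finite measure on $Y$. Since $b=\Id\times S$ preserves each fibre $\{x\}\times Y$ and scales $\sigma$ by the constant $\beta$, the constancy descends to the fibres and gives $S_*\nu_x=\beta^{-1}\nu_x$ for $\bar P$-almost every $x$, with the same constant. On the other hand $a=T\times\Id$ acts as the identity on the $Y$-coordinate while moving the base by $T$, so it only permutes the fibres and rescales them: the measure classes satisfy $[\nu_{Tx}]=[\nu_x]$, that is, $x\mapsto[\nu_x]$ is $T$-invariant. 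Now ergodicity of $T\times S$ forces $T$ to be ergodic on $(X,\bar P)$ --- a $T$-invariant set $\Lambda\subset X$ lifts to the $(T\times S)$-invariant set $\Lambda\times Y$ --- so $x\mapsto[\nu_x]$ is almost everywhere constant. Hence $\nu_x=f(x)\,\nu$ for a fixed measure $\nu$ on $Y$ and a positive measurable $f$, and integrating yields $\sigma=\mu\otimes\nu$ with $d\mu\egdef f\,d\bar P$. The fibre relation becomes $S_*\nu=\beta^{-1}\nu$, and the analogous computation for $a$ gives $T_*\mu=\alpha^{-1}\mu$.

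It remains to show $\alpha=\beta=1$, and here I would use the finite box together with conservativity. From $0<\sigma(X_0\times Y_0)<\infty$ we may assume $0<\mu(X_0)<\infty$ and $0<\nu(Y_0)<\infty$, and from $T_*\mu=\alpha^{-1}\mu$ we get $\mu(T^nX_0)=\alpha^{n}\mu(X_0)$ for all $n\in\ZZ$. If $\alpha\neq1$, then exactly one of the tails $\sum_{n>0}\mu(T^{n}X_0)$ and $\sum_{n>0}\mu(T^{-n}X_0)$ is finite. But $T\times S$ is conservative and invertible, so almost every point of $X_0\times Y_0$ returns to it infinitely often in each time direction; projecting on the first coordinate, $\sum_{n>0}\ind{X_0}(T^{\pm n}x)=\infty$ for $\mu$-almost every $x\in X_0$, and integrating over $X_0$ gives $\sum_{n>0}\mu(X_0\cap T^{\mp n}X_0)=\infty$, contradicting the finite tail. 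Hence $\alpha=\beta=1$: both $a$ and $b$ preserve $\sigma$, $\mu$ is $T$-invariant, $\nu$ is $S$-invariant, and $\sigma=\mu\otimes\nu$.

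Finally, the conservativity and ergodicity of the two factors follow by the same lifting idea. If $A\subset X$ were $T$-invariant and nontrivial, then $A\times Y$ would be a nontrivial $(T\times S)$-invariant set, contradicting ergodicity; so $T$, and likewise $S$, is ergodic. If $(X,\mu,T)$ had a wandering set $W$ with $0<\mu(W)$, then $W\times Y_0$ would be a wandering set for $T\times S$ of positive measure $\mu(W)\nu(Y_0)$, contradicting conservativity; so both factors are conservative. The main obstacle in this scheme is the disintegration step: because the $X$-marginal of $\sigma$ need not be $\sigma$-finite (it may take only the values $0$ and $\infty$), one must set up the fibre measures $\nu_x$ through an equivalent probability and check carefully that the global constant $\beta$ really descends to the identity $S_*\nu_x=\beta^{-1}\nu_x$ on almost every fibre; once this bookkeeping is in place, the remaining arguments are soft.
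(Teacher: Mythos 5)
Your overall architecture (constant Radon--Nikodym derivatives $\alpha,\beta$ by ergodicity, a fibre decomposition over the first coordinate, and lifting arguments for ergodicity and conservativity of the factors) is sound, and two of your ingredients are genuinely nice: the derivation of $\alpha=\beta=1$ from conservativity via the divergence of $\sum_{n}\mu\bigl(X_0\cap T^{-n}X_0\bigr)$ is a clean, elementary substitute for the paper's appeal to the Maharam extension, and your lifting arguments at the end match the paper's. However, there is a genuine gap at the crux of the theorem, namely the passage from the disintegration $\sigma=\int_X\nu_x\,d\bar P(x)$ to the product form. You argue that $x\mapsto[\nu_x]$ is a $T$-invariant measure-class-valued map, conclude by ergodicity of $T$ on $(X,\bar P)$ that the class is a.e.\ constant, and then write ``hence $\nu_x=f(x)\,\nu$.'' That last implication is false: constancy of the measure class only yields $\nu_x=g_x\cdot\nu$ for densities $g_x$ on $Y$ that may depend nontrivially on the $Y$-coordinate, whereas the product structure requires the $\nu_x$ to be genuine scalar multiples of one another. (Uniqueness of disintegration actually gives you the stronger relation $\nu_{Tx}=\lambda(x)\,\nu_x$, but exploiting it still requires either an ergodicity argument on the space of rays of $\sigma$-finite measures --- which needs a countably-separated Borel structure that you do not provide --- or ergodicity of $S$ on $(Y,\nu)$, which at that stage is not yet available; there is a circularity to break. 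Your closing remark locates the delicate point in the descent of $\beta$ to the fibres, but that step is routine; the proportionality of the fibre measures is the real content.)

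This is precisely where the paper takes a different route: it builds an equivalent \emph{probability} measure $\rho$ with density of product form $f\otimes g$ (where $f=\sum_n k_n\ind{X_n}$, $g=\sum_n\ell_n\ind{Y_n}$ are constructed from the orbit of the finite box $X_0\times Y_0$ and the constant $c$), checks that $\Id\times S$ and $T\times\Id$ are nonsingular for $\rho$ with derivatives depending only on the corresponding coordinate, and then invokes Lemma~3.1.1 of Rudolph--Silva to conclude that $\rho$ is the product of its marginals. To repair your proof you would either have to cite such a splitting lemma as well, or supply the missing argument that a $T$-invariant assignment $x\mapsto\nu_x$ satisfying $\nu_{Tx}=\lambda(x)\nu_x$ and $S_*\nu_x=\beta^{-1}\nu_x$ is a.e.\ proportional to a fixed measure; as written, the product structure is asserted rather than proved.
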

  
\begin{proof}
  
  Since $\Id\times S$ commutes with $T\times S$, the density 
  \[
      \frac{d (\Id\times S)_*\sigma}{d\sigma}(x,y)
  \]
  is $T\times S$-invariant. Hence, by ergodicity, it is $\sigma$-almost everywhere equal to some constant $c$, $0<c<\infty$.

Set, for each $n\in\ZZ$, $X_{n}\egdef T^{n}X_{0}$, and $Y_{n}\egdef S^{n}Y_{0}$, where $X_0$ and $Y_0$ are given in the assumptions of the theorem.
As $\sigma$ is invariant by $T\times S$, we deduce that, for all $\left(m,n\right)\in\mathbb{Z}^{2}$, 
\[ \sigma\left(X_{n}\times Y_{m}\right) = \sigma\left(X_{0}\times Y_{m-n}\right)
=c^{n-m}\sigma\left(X_{0}\times Y_{0}\right).
\]
Choose two sequences of positive numbers $\left(k_{n}\right)_{n\in\mathbb{Z}}$
and $\left(\ell_{n}\right)_{n\in\mathbb{Z}}$ such that 
\[
 \sum_{\left(n,m\right)\in\mathbb{Z}^{2}}k_{n}\ell_{m}c^{n-m}
 =\left(\sum_{n\in\mathbb{Z}}k_{n}c^{n}\right)\left(\sum_{m\in\mathbb{Z}}\ell_{m}c^{-m}\right)<\infty.                
 \]
Define $f\egdef\sum_{n\in\mathbb{Z}}k_{n}\ind{X_{n}}$ and $g\egdef\sum_{n\in\mathbb{Z}}\ell_{n}\ind{Y_{n}}$.
As $f\otimes g$ is supported on $\cup_{\left(n,m\right)\in\mathbb{Z}^{2}}\left(X_{n}\times Y_{m}\right)$
which contains $\cup_{n\in\mathbb{Z}}\left(X_{n}\times Y_{n}\right)=X\times Y$
mod $\sigma$ (by ergodicity of $T\times S$), we deduce that $f\otimes g>0$
$\sigma$-a.e. Moreover,
\[
\int_{X\times Y}f\otimes g\,d\sigma=\sigma\left(X_{0}\times Y_{0}\right)\left(\sum_{n\in\mathbb{Z}}k_{n}c^{n}\right)\left(\sum_{m\in\mathbb{Z}}\ell_{m}c^{-m}\right)<\infty.
\]
So we can assume that $\int_{X\times Y}f\otimes g\,d\sigma=1$, and we can
define the probability measure $\rho$ whose density with respect to $\sigma$ is equal to $f\otimes g$. We denote its respective projections on $X$ and $Y$ by $\rho_X$ and $\rho_Y$. 

Let us compute the density of $(\Id\times S)_*(\rho)$ with respect to $\rho$. For any measurable non-negative functions $h$ on $X$ and $k$ on $Y$, we have
\begin{align*}
&\int_{X\times Y}h\otimes k\circ\left(\Id\times S\right)\left(x,y\right)d\rho(x,y)\\
 & =\int_{X\times Y}h\left(x\right)k\left(Sy\right)f\left(x\right)g\left(y\right)d\sigma(x,y)\\
 & =c \int_{X\times Y}h\left(x\right)k\left(y\right)f\left(x\right)g\left(S^{-1}y\right)d\sigma(x,y)\\
 & =c \int_{X\times Y}h\left(x\right)k\left(y\right)\frac{g\left(S^{-1}y\right)}{g\left(y\right)}d\rho (x,y).
\end{align*}
This proves that the sought-after density is equal to $c\frac{g\left(S^{-1}y\right)}{g\left(y\right)}$. In particular, it only depends on $y$, and by taking $h=1$ in the above computation, we get that $S$ is non-singular with respect to $\rho_Y$, with the same density. 

Now we wish to prove that the non-singular dynamical system $(Y, \rho_Y, S)$ is ergodic and conservative. Indeed, if $A$ is an $S$-invariant set with $\rho_Y(A)>0$, then $X\times A$ is $T\times S$-invariant with $\rho(X\times A)>0$. By ergodicity of $T\times S$, $\rho(X\times A)=1$ and $\rho_Y(A)=1$. 
In the same vein, if $W$ is a wandering set for $S$, then $X\times W$
is a wandering set for $T\times S$, therefore $\rho_Y\left(W\right)=\rho\left(X\times W\right)=0$,
by conservativity of $T\times S$.

Consider the measure $\nu$ on $Y$ whose density with respect to $\rho_Y$ is equal to $1/g(y)$. It is straightforward to check that the density of $S_*(\nu)$ with respect to $\nu$ is constant equal to $c$. 
We claim that $c=1$.
Indeed, we consider the Maharam extension of $S$ defined on $(Y\times \RR_+^*,\nu\otimes dt)$ by
\[
  \tilde S(y,t)\egdef  (Sy, t/ c) \in Y\times \RR_+^*.
\]
Observe that if $c\neq 1$, $\tilde S$ is totally dissipative. But we know that $(Y,S,\nu)$ is conservative, hence $\tilde S$ is also conservative by Theorem~2 in~\cite{Maharam1964}, and we conclude that $c=1$.
This proves that $\sigma$ is in fact invariant by $\Id\times S$.

The same arguments applied on the first coordinate lead to similar results: If $\mu$ is the measure on $X$ whose density with respect to $\rho_X$ is equal to $1/f(x)$, then $\mu$ is invariant by $T$, and the measure-preserving dynamical system $(X,\mu,T)$ is conservative and ergodic. 

The end of the proof is an application of Lemma~3.1.1 in~\cite{RudolphSilva} to the measure $\rho$: This lemma proves that $\rho$ is the product of its marginals $\rho_X$ and $\rho_Y$, thus $\sigma=\mu\otimes\nu$.
\end{proof}

\end{appendix}

\bibliography{chacon-infinite}

\end{document}